\documentclass[11pt]{article}
\usepackage{amsmath,amsthm,amsfonts,amssymb,amscd}
\usepackage[scr]{rsfso}
\usepackage{stackrel}
 \usepackage{amsmath}
\usepackage{enumerate} 
\usepackage{color}
\usepackage{float}
\usepackage{soul}
\usepackage{graphicx}
%\usepackage[colorlinks=true]{hyperref}
%\definecolor{labelkey}{rgb}{0,0.08,0.45}
%\definecolor{refkey}{rgb}{0,0.6,0.0}
\usepackage{mathpazo}
\usepackage{fancyhdr}
\usepackage{empheq}
\usepackage[margin=1in]{geometry}

\def\XX{\mathbb{X}}
\def\YY{\mathbb{Y}}

\def\Limsup{\mathop{{\rm Lim}\,{\rm sup}}}

\def\tto{\rightrightarrows}
\def\Hat{\widehat}

\def\Tilde{\widetilde}
\def\Bar{\overline}
\def\ra{\rangle}
\def\la{\langle}
\def\ve{\varepsilon}
\def\B{\mathbb{B}}
\def\h{\hfill\Box}
\def\R{\mathbb{R}}
\def\ox{\bar{x}}
\def\OX{\Bar{X}}
\def\oy{\bar{y}}
\def\OY{\Bar{Y}}

\def\ov{\bar{v}}
\def\OV{\Bar{V}}
\def\ow{\bar{w}}

\def\ou{\bar{u}}
\def\OU{\Bar{U}}

\def\op{\bar{p}}

\def\cone{\mbox{\rm cone}\,}
\def\ri{\mbox{\rm ri}\,}

\def\Im{\mbox{\rm Im}\,}

\def\gph{\mbox{\rm gph}\,}

\def\rank{\mbox{\rm rank}\,}

\def\dom{\mbox{\rm dom}\,}
\def\Ker{\mbox{\rm Ker}\,}

\def\cl*co{\mbox{\rm cl}^*\mbox{\rm co}\,}

\def\cl{\mbox{\rm cl}\,}

\def\h{\hfill\triangle}
\def\dn{\downarrow}
\def\O{\Omega}

\def\st{\stackrel}
\def\oR{\Bar{\R}}

\def\hs7{\hspace*{7pt}}

\def\Id{\mathbb{I}}

\renewcommand{\theequation}{\thesection.\arabic{equation}}

\def\h{\hfill\Box}
\def\kk{\kappa}
\begin{document}

\newtheorem{Theorem}{Theorem}[section]
\newtheorem{Conjecture}[Theorem]{Conjecture}
\newtheorem{Proposition}[Theorem]{Proposition}
\newtheorem{Remark}[Theorem]{Remark}
\newtheorem{Lemma}[Theorem]{Lemma}
\newtheorem{Corollary}[Theorem]{Corollary}
\newtheorem{Definition}[Theorem]{Definition}
\newtheorem{Example}[Theorem]{Example}
\newtheorem{Fact}[Theorem]{Fact}
\newtheorem*{pf}{Proof}
\renewcommand{\theequation}{\thesection.\arabic{equation}}
\normalsize
\normalfont
\medskip
\def\endproof{$\h$\vspace*{0.1in}}

\title{\bf Isolated Calmness in Regularized Convex Optimization}
\date{}
\author{TRAN T. A. NGHIA\footnote{Department of Mathematics and Statistics, Oakland University, Rochester, MI 48309, USA; email: nttran@oakland.edu} \and HUY N. PHAM\footnote{Department of Mathematics and Statistics, Oakland University, Rochester, MI 48309, USA; email:  hnpham@oakland.edu}}

\maketitle

\begin{center}
    Dedicated to Professor Hoàng Xuân Phú on the occasion of his 70th birthday  
\end{center}

\begin{quote}
{\small \noindent {\bf Abstract.} This paper studies the isolated calmness of the optimal solution mapping and the associated Lagrange system for regularized convex composite optimization problems. Several necessary and sufficient conditions for this property are established. These conditions are geometric in nature and relatively simple to verify. To support the analysis, we also develop a so-called {\em zero-product property} for second-order structures, namely the graphical derivative of the subgradient mapping of convex functions.

}

\noindent{\bf Keywords.} Least-squares, isolated calmness, convex optimization, variational analysis and nonsmooth optimization, second-order analysis, regularized problem.
\vspace{0.1in}

\noindent{\bf Mathematics Subject Classification} (2020). 49J52, 49J53, 49K40, 90C25, 90C31
\end{quote}
\vspace{-0.2in}

\section{Introduction}
\setcounter{equation}{0}

One of the most popular methods to recover a signal $x_0$ in a Euclidean space $\XX$ from its observation $b$ in another Euclidean space $\mathbb{E}$ over a linear operator $\Phi:\XX\to \mathbb{E}$, i.e., $b\approx\Phi x_0$ is solving the following least-squares regularized problem
\begin{equation}\label{eq:RLS}
\min_{x\in \XX}\quad \frac{1}{2\mu}\|\Phi x-b\|^2+h(x),    
\end{equation}
where $\mu >0$  is known as the regularization (scaling, tuning) parameter and $h:\XX\to \R\cup\{\infty\}$ is a proper lower semi-continuous (l.s.c.) convex function (a.k.a. regularizer), usually determined due to some desirable prior information of signal $x_0$. For example, if we expect to recover a {\em sparse} (or {\em low-rank}) signal, a reasonable choice for $h$ is the $\ell_1$ norm in $\XX=\R^n$ (or the nuclear norm in $\XX=\R^{m\times n}$, respectively). In many situations, a signal may be not sparse, but it is after a linear transformation, e.g., the {\em Fourier transformation} of a signal in compressed sensing or the {\em discrete gradient operator} of an image in imaging. A modified model to match these situations is    
\begin{equation}\label{eq:MLS}
\min_{x\in \XX}\quad \frac{1}{2\mu}\|\Phi x-b\|^2+g(Kx),    
\end{equation}
where $K:\XX\to \YY$ is a linear operator and $g:\YY\to \R\cup\{\infty\}$ is a proper l.s.c. convex regularizer. Both problems \eqref{eq:RLS} and ~\eqref{eq:MLS} have been used widely in optimization, statistics, signal and image processing, as well as machine learning. 

The positive tuning parameter $\mu$  represents for the trade-off between the data fitting error $\|\Phi x-b\|$ and the prior regularization $h(x)=g(Kx)$. As $b$ is usually a noised observation of $\Phi x_0$, it is another parameter in this model. It is natural to question the stability of the optimal solution mapping of problem \eqref{eq:RLS} (or \eqref{eq:MLS}) with these two parameters defined by
\begin{equation}\label{eq:OS}
    S_{LS}(b,\mu):={\rm argmin}\,\left\{\frac{1}{2\mu}\|\Phi x-b\|^2+ h(x)|\; x\in \XX\right\}.
\end{equation}
Different kinds of Lipschitz stability of this solution mapping have been studied recently in \cite{BBH23,BBH24,CHNS25,HYZ24,MWY24,N24}. In particular, \cite{BBH23,CHNS25,N24} established necessary and sufficient conditions for the situation that the solution mapping $S$ is locally single-valued and Lipschitz continuous around a given pair $(\bar b,\bar \mu)\in \mathbb{E}\times \R_{++}$  for  different regularizers $h$ such as the $\ell_1$ norm \cite{BBH23}, the $\ell_1/\ell_2$ norm and the nuclear norm \cite{N24}, and the broad class of $\mathcal{C}^2$-reducible dual regularizers \cite{CHNS25}. The single-valued Lipschitz continuity of this mapping is also behind the study of its sensitivity analysis in \cite{BLPS21,BPS22,VDFPD17}. On the other hand, \cite{HYZ24,MWY24} investigated the (global) Lipschitz continuity of $S$ as a set-valued mapping, when $h(x)$ is the $\ell_1$ norm in $\R^n$. 

Our paper mainly studies the Lipschitz continuity {\em at} a given pair $(\bar b,\bar \mu)\in \mathbb{E}\times \R_{++}$, which is usually referred as the {\em isolated calmness} of $S$ at $(\bar b,\bar \mu)$ for some $\ox\in S(\bar b,\bar \mu)$ in the sense that there exists a neighborhood $U\times V\times  W$ of $(\ox,\bar b,\bar \mu)$ in $\XX\times \mathbb{E}\times \R_{++}$  and a constant $\kk>0$ such that 
\begin{equation}\label{eq:Ca}
    S(b,\mu)\cap U\subset \ox+\kk(\|b-\bar b\|+|\mu-\bar \mu|)\B\quad \mbox{for all}\quad (b,\mu)\in V\times W,
\end{equation}
where $\B$ is the Euclidean unit ball in $\XX$.  This property means that any optimal solution of problem~\eqref{eq:RLS} with small perturbation $(b,\mu)$ of $(\bar b,\bar \mu)$ in a neighborhood of $\ox$ can be approximated by $\ox$ with a ``linear rate'' $(\|b-\bar b\|+|\mu-\bar \mu|)$. It plays a significant role in the stability theory for optimization problems and convergence analysis of algorithms; see, e.g., the monograph \cite{DR09,I17,KK02} for the detailed study of this property  and its applications to optimization and algorithms. The mapping $S$ does not need to be single-valued around $(\bar b,\bar \mu)$, but $\ox$ has to be the isolated point of $S(\bar b,\bar \mu)$. It is well-known that the isolated calmness  can be characterized fully via the so-called {\em graphical derivative} of $S$ at $(\bar b,\bar \mu)$ for $\ox$, a generalized differential structure for set-valued mappings; cf. \cite[Definition~8.33]{RW98}. This characterization, usually referred as Levy-Rockafellar criterion \cite{L96} is the main mechanism for studying the isolated calmness. But it depends on the computation of the graphical derivative, which could be complicated in many problems. In particular, it is not clear to us how to compute the graphical derivative of the solution mapping $S$ in \eqref{eq:MLS} fully. Although this mapping can be written as a {\em generalized equation} as in \cite{L96}
\[
S_{LS}(b,\mu)=\left\{x\in \XX|\; 0\in \frac{1}{\mu}\Phi^*(\Phi x-b)+\partial h(x)\right\},
\]
 only an upper estimate for the graphical derivative of this mapping is known \cite[Corollary~3.5]{L96}. Actually, our paper does not try to compute the graphical derivative of this set-valued mapping. Our approach relies on the {\em linearization} of this generalized equation and the formula of the {\em kernel} of the graphical derivative of the {\em subdifferential mapping} without fully calculating this graphical derivative.  The isolated calmness for different  variational systems and various choices of parameters (tilt and canonical perturbations) were studied in  \cite{BM24,CS18,DSZ17,LPB21,MMS22,ZZ16}, mostly on Karush-Kuhn-Tucker (KKT)/Lagrange systems of constrained and composite (nonlinear) optimization problems, which was initiated by Dontchev and Rockafellar in \cite{DR97} for nonlinear programming. It is worth noting that the parameter pair $(b,\mu)$ studied in our paper varies from the  tilt and canonical perturbations used in those papers. Moreover, despite the fact that our function $h$ will be assumed to satisfy the {\em quadratic growth condition}, it is in a general setting and {\em partially} covers the case of {\em nuclear norm} regularizer in \cite{CS18}, {\em convex piecewise linear-quadratic} functions in \cite[Section 8]{MMS22}, and the indicator to polyhedral sets \cite{DR97} and positive semidefinite cone \cite{ZZ16}.

{\bf Our Contribution.}
For a given pair $(\bar b,\bar \mu)\in \mathbb{E}\times \R_{++}$, we show that the optimal solution mapping $S$ in \eqref{eq:OS} is isolated calm at $(\bar b,\bar \mu)$ for $\ox\in S(\bar b, \bar \mu)$ if and only if 
\begin{equation}\label{eq:Char}
\Ker\Phi\cap T_{\partial h^*(\ov)}(\ox)=\{0\}\quad \mbox{with}\quad \ov:=-\frac{1}{\bar\mu}\Phi^*(\Phi \ox-\bar b),
\end{equation}
where $T_{\partial h^*(\ov)}(\ox)$ represents for the {\em tangent cone} to the set $\partial h^*(\ov)$ at $\ox$, provided that the function $h$ satisfies the {\em quadratic growth condition} \cite{AG08,BS00} at $\ox$ for $\ov$; see our Corollary~\ref{cor:Equ1}. The assumption of quadratic growth condition on the regularizer $h$ is not restrictive; many popular regularizers satisfy this condition including {\em convex piecewise linear-quadratic} functions, the group Lasso $\ell_1/\ell_2$ norm \cite{ZS17}, and many {\em spectral} matrix functions \cite{CDZ17} including the nuclear norm.  This condition~\eqref{eq:Char} appears recently in \cite[Theorem~3.3]{FNP23} to characterize of {\em strong minimum property} at $\ox$. Consequently, the isolated calmness of $S$ at $(\bar b,\bar \mu)$ for $\ox$ is equivalent to the property that $\ox$ is a {\em strong solution} (of order $2$) of problem~\eqref{eq:RLS} at $(\bar b,\bar \mu)$. This is consistent with the results in \cite{AG08,ZT95}, which established the equivalence between the isolated calmness of the inverse of subdifferential mapping of a convex function and the strong minimum property. But our setup is different, especially on the choice of parameters.

When the function $h$ is written as a composite function $g(Kx)$ as in problem~\eqref{eq:MLS}, the characterization \eqref{eq:Char} turns into 
\begin{equation}\label{eq:Char2}
\Ker\Phi\cap K^{-1}\left(T_{\partial g^*(\oy)}(K\ox)\right)=\{0\}
\end{equation}
with any multiplier $\oy\in \partial g(K\ox)$ satisfying $K^*\oy=\ov$, under some mild conditions on the regularizer $g$; see Theorem~\ref{thm:NSC}. With this composite model, another parameterized problem with an extra canonical perturbation $v\in \YY$ is usually considered
\begin{equation}\label{eq:CLS}
\min_{x\in \XX}\quad \frac{1}{2\mu}\|\Phi x-b\|^2+g(Kx+v),    
\end{equation}
that allows a small noise in computing $Kx$. The corresponding  {\em primal-dual}/Lagrange system that involves both the optimal solution $x$ and Lagrange multiplier $y$ is usually written by
\begin{equation}\label{eq:KKT}
\begin{pmatrix}0\\v\end{pmatrix}\in\begin{pmatrix} 0&K^*\\-K&0\end{pmatrix}\begin{pmatrix}x\\y\end{pmatrix}+ \begin{pmatrix}\frac{1}{\mu}\Phi^*(\Phi x-b)\\\partial g^*(y)\end{pmatrix}.
\end{equation}
This particular system is close with those in \cite{BM24,CS18,DR97,DSZ17,LPB21,MMS22,ZZ16} for constrained and composite optimization problems with the additional tilt (linear) perturbation. Again, the main difference is at the perturbation on $(b,\mu)$ and the general setting of function $g$ in our paper. We show that condition~\eqref{eq:Char2} is still a characterization for the isolated calmness of the solution mapping of system~\eqref{eq:KKT} at $(\bar b,\bar \mu,0)$ for $(\ox,\oy)$
 under some element conditions including the following {\em Strict Robinson's Constraint Qualification} (SRCQ, in brief) for the composite model $g(Kx)$
 \begin{equation}\label{con:SRCQQ}
     \Ker K^*\cap T_{\partial g(K\ox)}(\oy)=\{0\};
 \end{equation}
 see our Corollary~\ref{cor:Eqi3}.
The involvement of SRCQ is consistent with the theory built up for the isolated calmness of KKT/Lagrange systems for constrained and composite optimization problems in \cite{CS18,DSZ17,DR97,ZZ16}. A crucial part in our result is the assumption that the function $g$ satisfies the quadratic growth condition at $K\ox$ for $\oy$ and that its Fenchel conjugate $g^*$ also satisfies the quadratic growth condition at $\oy$ for $K\ox$. The class of these functions are broad in optimization including  convex piecewise linear-quadratic functions, the group Lasso $\ell_1/\ell_2$ norm, and many {\em spectral} matrix functions aforementioned.

Actually, our paper also studies the more general parametrized optimization problem
\begin{equation}\label{eq:GP}
    \min_{x\in \XX}\quad f(x,p)+g(Kx+v),
\end{equation}
where $\mathbb{P}$ is another Euclidean (parameter) space and $f:\XX\times \mathbb{P}\to \R$ is a continuously twice-differentiable function, at which $f(\cdot,p)$ is convex for any $p\in \mathbb{P}$; see our Theorem~\ref{thm:calm}. But our main results with full characterizations for isolated calmness will be focused on the least-squares regularized problem~\eqref{eq:RLS} in Section~4. Due to the convexity assumption on our functions, the closest model to \eqref{eq:GP} among other papers on isolated calmness of KKT/Lagrange systems \cite{BM24,CS18,DR97,DSZ17,LPB21,MMS22,ZZ16} is \cite{CS18} proposed by Cui and Sun with some specific function $g$ involving the nuclear norm and polyhedral constraints that we will discuss more in Section~4. Our conditions returns those in \cite{CS18} and our results can  recover some important part of their main one \cite[Theorem~5.3]{CS18}, but most importantly they can extend it to more general cases of $g$.   Another significant component of  our paper is the study on formulating the {\em kernel} of the {\em subgradient graphical derivative} of convex functions and establishing the so-called {\em zero-product property} for it in Section~3. These results together with the Levy-Rockafellar criterion play crucial roles in our analysis of characterizing the isolated calmness of solution mappings of variational systems in \eqref{eq:OS} and \eqref{eq:KKT}.

\section{Preliminaries}
\setcounter{equation}{0}
Throughout this paper, we suppose that $\XX$ is a Euclidean finite dimensional space with  the inner product $\la \cdot,\cdot \ra$ that endorses the Euclidean  norm $\|\cdot\|$. $\B_\ve(x)$ stands for the open ball in $\XX$ with radius $\ve>0$ and center $x\in \XX$. Most of definitions and notations in this section come from several classical monographs on Convex Analysis \cite{R70} and Variational Analysis \cite{BS00, I17, M1,RW98}. The most important geometric structure used in our paper is the (Bouligand/Severi) {\em tangent/contingent cone} to a closed set $\Omega\subset \XX$ at a point $\ox\in \Omega$ defined by
\begin{equation}\label{eq:Tg}
    T_\Omega(\ox):=\{w\in \XX|\; \exists t_k\dn 0, w_k\to w, \ox+t_k w_k\in \Omega\}=\Limsup_{t\dn 0}\frac{\Omega-\ox}{t}. 
\end{equation}
When $\Omega$ is a convex set, the tangent cone $T_\Omega(\ox)$ is computed by
\begin{equation}\label{eq:TgCon}
T_\Omega(\ox)=\cl[\cone(\Omega-\ox)],
\end{equation}
where $\cone\Omega:=\{tx|\, t\in \R_+, x\in \Omega\}$ is the conic hull of $\Omega$ and $\cl\O$ is the closure of $\O$. The {\em relative interior} of $\O$ is defined by
\begin{equation}\label{eq:ri}
\ri\O:=\{x\in \O|\, \exists \ve>0,\B_\ve(x)\cap {\rm aff}\, \O\subset \O\},
\end{equation}
where ${\rm aff}\, \O$ is the {\em affine hull} of $\O$. We denote $d(x;\Omega)$ for the distance from $x\in \XX$ to the set $\Omega$
\[
d(x;\Omega)=\inf\,\{\|x-u\||\, u\in \O\}. 
\]
%Moreover, the {\em indicator} $\delta_\O(x)$ is equal to $0$ if $x\in \O$ and $\infty$ if $x\not \O$.
Given a set-valued mapping $F:\XX\tto \YY$ between two Euclidean spaces $\XX$ and $\YY$, the domain and the graph of $F$ are defined respectively by
\[
   \dom F:=\{x\in \XX|\; F(x)\neq \emptyset\}\quad\mbox{and}\quad \gph F:=\{(x,y)\in \XX\times \YY|\, y\in F(x)\}.   
\]
The {\em kernel} of $F$ is 
\[
\Ker F:=F^{-1}(0)=\{x\in \XX|\, 0\in F(x)\}.
\]
 With $(\ox,\oy)\in \gph F$, $F$ is said to be {\em metrically subregular} at $\ox$ for $\oy$ if there exist $\kk,\ve>0$ such that 
\begin{equation}\label{def:MS}
    d(x; F^{-1}(\oy))\le \kk d(\oy; F(x))\quad\mbox{for}\quad x\in \B_\ve(\ox). 
\end{equation}
If, additionally, $\ox$ is an isolated point of $F^{-1}(\oy)$, we say $F$ to be {\em strongly metrically subregular} at $\ox$ for $\oy$.  The most important definition in our paper is the so-called {\em isolated calmness} for set-valued mappings. A set-valued mapping $S:\YY\tto \XX$ is said to be {\em isolated calm} at $\oy\in \dom S$ for $\ox\in S(\oy)$ whenever $F=S^{-1}$ is strongly metrically subregular at $\ox$ for $\oy$. Specifically, $S$ is isolated calm at $\oy$ for $\ox$ if there exist $\ell>0$ and a neighborhood $U\times V\subset \XX\times \YY$ of $(\ox,\oy)$ such that 
\begin{equation*}
    \|x-\ox\|\le \ell \|y-\oy\|\quad \mbox{when}\quad x\in S(y)\cap U\;\mbox{and}\; y\in V,
\end{equation*}
or equivalently
\begin{equation}\label{eq:IC}
    S(y)\cap U\subset \ox+\ell\|y-\oy\|\B\quad \mbox{for all}\quad y\in V,
\end{equation}
where $\B$ is the unit ball on $\XX$.  The reader may find more about the history of metric subregularity, isolated calmness, and their applications to optimization and many variational systems in \cite{DR09,I17,KK02}. 

It is well-known that the strong metric subregularity can be fully characterized via the so-called {\em graphical derivative} \cite[Definition~8.33]{RW98} of set-valued mapping $F$ at $\ox$ for $\oy$, which is another set-valued mapping $DF:\XX\tto \YY$  defined by
\begin{equation}\label{def:GD}
DF(\ox|\,\oy)(w):=\{z\in \YY|\, (w,z)\in T_{{\rm gph}\, F }(\ox,\oy)\}\quad \mbox{for any}\quad w\in \XX.
\end{equation}
Specifically, when the graph of $F$ is locally closed around $(\ox,\oy)\in \gph F$, $F$ is strongly metrically subregular at $\ox$ for $\oy$ if and only if 
\begin{equation}\label{eq:LR0}
DF(\ox|\,\oy)^{-1}(0)=\{0\}\quad \mbox{or equivalently}\quad \Ker DF(\ox|\,\oy)=\{0\},
\end{equation}
which is usually referred as Levy-Rockafellar criterion for strong metric subregularity; see, e.g., \cite[Theorem~4C.1]{DR09} and \cite[Proposition~4.1]{L96}.  Equivalently, when the graph of $S$  is locally closed around $(\oy,\ox)\in \gph S$, $S$ has the isolated calmness property at $\oy$ for $\ox$ if and only if 
\begin{equation}\label{eq:LR02}
    DS(\oy|\,\ox)(0)=\{0\}.
\end{equation}

 A significant case of set-valued mapping is the subdifferential mapping $\partial \varphi:\XX\tto \XX$ of a proper l.s.c. convex function $\varphi:\XX\to \oR:=\R\cup\{\infty\}$ defined by
 \[
 \partial \varphi(x):=\{v\in \XX|\, \varphi(u)-\varphi(x)\ge \la v, u-x\ra\ , \forall u\in \XX\}.
 \]
The Fenchel {\em conjugate} of $\varphi$ is known as
\[
\varphi^*(v):=\sup\{\la v,x\ra-\varphi(x)|\, x\in \XX\}\quad\mbox{for}\quad v\in \XX.
\] 
It is well-known \cite{AG08,ZT95} that $\partial\varphi$ is metrically subregular at $\ox\in \dom \varphi$ for $\ov\in \partial \varphi(\ox)$ if and only if $\varphi$ satisfies the {\em quadratic growth condition} at $\ox$ for $\ov$ in the sense that there exist $c,\ve>0$ such that 
\begin{equation}\label{eq:QGC}
    \varphi(x)\ge \varphi(\ox)+\la \ov, x-\ox\ra+\frac{c}{2}\left[d(x;(\partial\varphi)^{-1}(\ov))\right]^2\quad \mbox{for all}\quad x\in \B_\ve(\ox).
\end{equation}
Moreover, $\partial \varphi$ is strongly metrically subregular at $\ox$ for $\ov$ (or  $(\partial\varphi)^{-1}$ is isolated calm at $\ov$ for $\ox$)  if and only if there exist $c,\ve>0$ such that
\begin{equation}\label{eq:SM}
    \varphi(x)\ge \varphi(\ox)+\la \ov, x-\ox\ra+\frac{c}{2}\|x-\ox\|^2\quad \mbox{for all}\quad x\in \B_\ve(\ox),
\end{equation}
which can be characterized by \cite[Theorem~13.24]{RW98}
\begin{equation}\label{eq:SS}
    d^2\varphi(\ox|\, \ov)(w)>0\quad \mbox{for all}\quad w\neq 0, 
\end{equation}
where $d^2\varphi(\ox|\, \ov):\XX\to \oR$ is the {\em second subderivative}  defined by
\begin{equation}\label{def:SS}
    d^2\varphi(\ox|\, \ov)(w):=\liminf_{w^\prime\to w,t\dn 0}\dfrac{\varphi(\ox+tw^\prime)-\varphi(\ox)-t\la \ov,w^\prime\ra}{\frac{1}{2}t^2}.
\end{equation}
The function $\varphi$ is said to be {\em twice epi-differentiable} at $\ox$ for $\ov$ if for any $w\in \XX$ and $t_k\dn 0$, there exists $w_k\to w$ such that the limit 
\[
\lim_{k\to \infty}\dfrac{\varphi(\ox+t_kw_k)-\varphi(\ox)-t_k\la \ov,w_k\ra}{\frac{1}{2}t_k^2}
\]
exists and equals to $d^2\varphi(\ox|\, \ov)(w)$ defined in \eqref{def:SS}. The class of twice epi-differentiable functions is  broad including many important functions in optimization theory such as  convex piecewise linear-quadratic functions, {\em full amenable} functions, and many {\em composite and spectral} functions \cite{MS20,RW98}.

The key second-order structure used in our paper is the {\em subgradient graphical derivative} $D\partial \varphi(\ox|\,\ov):\XX\tto \XX$, which is the graphical derivative of subgradient mapping $\partial\varphi$ at $\ox$ for $\ov\in \partial \varphi(\ox)$. Specifically, 
\[
D\partial \varphi(\ox|\,\ov)(w)=\left\{z\in \XX|\, (w,z)\in T_{{\rm gph}\, \partial \varphi}(\ox, \ov)\right\}.
\]
When $\varphi$ is convex, $\partial \varphi $ is {\em monotone}. This easily leads to the {\em positive semi-definite property} of $D\partial\varphi(\ox|\,\ov)$ in the sense that 
\begin{equation}\label{eq:SPD0}
    \la z,w\ra\ge 0\quad \mbox{for any}\quad z\in D\partial\varphi(\ox|\, \ov)(w). 
\end{equation}
The equal sign will be investigated later in Section~\ref{sec:K} with the so-called {\em zero-product property} for the subgradient graphical derivative.

Finally, with a closed convex set $\Omega\subset \XX$, the indicator function $\delta_\O(x)$ is equal to $0$ if $x\in \O$ and $\infty$, otherwise. The {\em normal cone} to $\O$ at $\ox\in \O$ is the subdifferential of $\delta_\O$ at $\ox$,  denoted by $N_\O(\ox)$. It is well known that 
\[
N_\O(\ox)=\{v|\, \la v, x-\ox\ra\le 0, \forall x\in \O\}=(T_\O(\ox))^*,
\]
which is the {\em polar dual cone} of the tangent cone $T_\O(\ox)$ in \eqref{eq:TgCon}.

\section{Kernel and zero-product property of the subgradient graphical derivative of convex functions}\label{sec:K}
\setcounter{equation}{0}
Let us start this section with a technical lemma useful for later analysis. It has some root in \cite[Proposition~4.1]{GO16}. For the sake of completeness, we provide the full proof here. 

\begin{Lemma}[Kernel of graphical derivative]\label{lem:Ker} Let $F:\XX\tto\YY$ be a set-valued mapping whose graph is locally closed around $(\ox,\oy)\in \gph F$. Then we have
\begin{equation}\label{eq:KerDS}
    \Ker D F(\ox|\,\oy)\supset  T_{F^{-1}(\oy)}(\ox). 
\end{equation} 
Moreover, if  $F$ is metrically subregular at $\ox$ for $\oy$ with some modulus $\kk>0$, then  
\begin{equation}\label{eq:KerD}
    \Ker D F(\ox|\,\oy)=T_{F^{-1}(\oy)}(\ox).
\end{equation} 
Consequently, if the proper l.s.c. convex function $\varphi:\XX\to \oR$ satisfies the quadratic growth condition \eqref{eq:QGC} at $\ox\in \dom \varphi$ for $\ov\in \partial \varphi(\ox)$, then 
\begin{equation}\label{eq:Ker}
   \Ker D\partial \varphi(\ox|\,\ov)=T_{\partial \varphi^*(\ov)}(\ox).
\end{equation}
\end{Lemma}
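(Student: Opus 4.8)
The plan is to prove the three assertions in sequence, treating the general set-valued inclusion \eqref{eq:KerDS} and equality \eqref{eq:KerD} first, and then specializing to the subgradient mapping to obtain \eqref{eq:Ker}. For the inclusion \eqref{eq:KerDS}, I would take an arbitrary $w\in T_{F^{-1}(\oy)}(\ox)$ and show $0\in DF(\ox|\,\oy)(w)$, i.e., $(w,0)\in T_{\gph F}(\ox,\oy)$. By the definition of the tangent cone in \eqref{eq:Tg}, there exist $t_k\dn 0$ and $w_k\to w$ with $\ox+t_kw_k\in F^{-1}(\oy)$, which means $\oy\in F(\ox+t_kw_k)$, equivalently $(\ox+t_kw_k,\oy)\in\gph F$. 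Writing this as $(\ox,\oy)+t_k(w_k,0)\in\gph F$ and letting $k\to\infty$ shows $(w,0)\in T_{\gph F}(\ox,\oy)$, which is exactly $0\in DF(\ox|\,\oy)(w)$, giving $w\in\Ker DF(\ox|\,\oy)$.

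For the reverse inclusion in \eqref{eq:KerD}, this is where metric subregularity enters and is the main obstacle. Take $w\in\Ker DF(\ox|\,\oy)$, so $(w,0)\in T_{\gph F}(\ox,\oy)$; then there exist $t_k\dn 0$ and $(w_k,z_k)\to(w,0)$ with $(\ox+t_kw_k,\oy+t_kz_k)\in\gph F$, i.e., $\oy+t_kz_k\in F(\ox+t_kw_k)$. The goal is to produce a sequence landing exactly in $F^{-1}(\oy)$ so as to certify $w\in T_{F^{-1}(\oy)}(\ox)$. The idea is to apply the metric subregularity estimate \eqref{def:MS} at the points $x_k:=\ox+t_kw_k$ (which lie in $\B_\ve(\ox)$ for large $k$): since $\oy+t_kz_k\in F(x_k)$, we have $d(\oy;F(x_k))\le t_k\|z_k\|$, and hence $d(x_k;F^{-1}(\oy))\le\kk\, d(\oy;F(x_k))\le\kk t_k\|z_k\|$. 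Choosing $u_k\in F^{-1}(\oy)$ with $\|x_k-u_k\|$ close to this distance and setting $\tilde w_k:=(u_k-\ox)/t_k$, I would verify that $\tilde w_k\to w$ (because $\|\tilde w_k-w_k\|=\|u_k-x_k\|/t_k\le\kk\|z_k\|\to 0$) while $\ox+t_k\tilde w_k=u_k\in F^{-1}(\oy)$, which by \eqref{eq:Tg} gives $w\in T_{F^{-1}(\oy)}(\ox)$. Combined with \eqref{eq:KerDS}, this yields the equality.

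For the final consequence \eqref{eq:Ker}, I would apply \eqref{eq:KerD} to the specific mapping $F=\partial\varphi$ at $\ox$ for $\ov$, with graphical derivative $D\partial\varphi(\ox|\,\ov)$. The hypothesis that $\varphi$ satisfies the quadratic growth condition \eqref{eq:QGC} is, by the cited characterization in \cite{AG08,ZT95} recalled in the Preliminaries, precisely the statement that $\partial\varphi$ is metrically subregular at $\ox$ for $\ov$, so \eqref{eq:KerD} applies and gives $\Ker D\partial\varphi(\ox|\,\ov)=T_{(\partial\varphi)^{-1}(\ov)}(\ox)$. The only remaining point is the identification $(\partial\varphi)^{-1}(\ov)=\partial\varphi^*(\ov)$, which is a standard consequence of convex duality (the inverse of the subdifferential of a proper l.s.c. convex function is the subdifferential of its Fenchel conjugate). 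Substituting this identity produces exactly $T_{\partial\varphi^*(\ov)}(\ox)$ on the right-hand side, completing the proof. I expect the forward inclusion and the specialization to be routine, with the subregularity-driven reverse inclusion being the only step requiring genuine care in controlling the rescaled sequences.
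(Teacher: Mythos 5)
Your proposal is correct and follows essentially the same route as the paper's proof: the direct verification of the inclusion \eqref{eq:KerDS} from the tangent-cone definition, the rescaling argument via the metric subregularity estimate to produce the corrected sequence $\tilde w_k$ for the reverse inclusion in \eqref{eq:KerD}, and the specialization using the equivalence of quadratic growth with metric subregularity of $\partial\varphi$ together with the identity $(\partial\varphi)^{-1}=\partial\varphi^*$. No substantive differences to report.
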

\begin{proof} 
To justify  \eqref{eq:KerDS}, pick any $w\in T_{F^{-1}(\oy)}(\ox)$. There exist $t_k\dn 0$ and $w_k\st{\XX}\to w$ such that $\ox+t_kw_k\in F^{-1}(\oy)$, which means $\oy\in F(\ox+t_kw_k)$. This leads us to $0\in DF(\ox|\,\oy)(w)$, i.e., $w\in \Ker D F(\ox|\,\oy)$. Formula \eqref{eq:KerDS} is verified.

Next, suppose that $F$ is metrically subregular at $\ox$ for $\oy$. To verify \eqref{eq:KerD}, we just need to show the inclusion ``$\subset$'' in \eqref{eq:KerD}. Indeed, for any $w\in \Ker DF(\ox|\,\oy)$, i.e.,  $0\in DF(\ox|\,\oy)(w)$, we find a sequence $t_k\dn 0$ and $(w_k,z_k)\st{\XX\times \YY}\to (w,0)$ such that $\oy+t_kz_k\in F(\ox+t_kw_k)$. For sufficiently large $k$, we obtain from \eqref{def:MS} that 
\[
\kk t_k\|z_k\|\ge \kk d(\oy; F(\ox+t_k w_k))\ge d(\ox+t_kw_k;F^{-1}(\oy))=t_k d\left(w_k;\frac{1}{t_k}(F^{-1}(\oy)-\ox)\right).
\]
This allows us to find $w_k^\prime \in \frac{1}{t_k}(F^{-1}(\oy)-\ox)$ such that $\kk\|z_k\|\ge \|w_k-w_k^\prime\|$. As $z_k\to 0$,  we obtain that  $w_k^\prime \to w$. This implies that $w\in T_{F^{-1}(\oy)}(\ox)$ and verifies the inclusion ``$\subset$'' in \eqref{eq:KerD}.

Finally, when the convex function $\varphi$ satisfies the quadratic growth condition~\eqref{eq:QGC} at $\ox\in \dom \varphi$ for $\ov\in \partial \varphi(\ox)$, the subdifferential mapping $\partial \varphi$ is metrically subregular at $\ox$ for $\ov$ as discussed before \eqref{eq:QGC}. By \eqref{eq:KerD}, we have 
\[
\Ker D\partial \varphi(\ox|\,\ov)=T_{\partial \varphi^{-1}(\ov)}(\ox),
\]
which is exactly \eqref{eq:Ker} due to the fact that $\partial\varphi^{-1}(\ov)=\partial \varphi^*(\ov)$. 
 \end{proof}

To proceed, let us recall the following useful lemma from \cite[Lemma~2.4]{N24}.

\begin{Lemma}\label{lem:dual} Let $\varphi:\XX\to \oR$ be a proper convex function and $(\ox,\ov)\in \gph \partial \varphi$. For any $z\in D\partial \varphi(\ox|\,\ov)(w)$, we have
\begin{equation}\label{eq:InDC}
    2\la z,w\ra\ge d^2\varphi(\ox|\,\ov)(w)+d^2\varphi^*(\ov|\,\ox)(z).
\end{equation}
\end{Lemma}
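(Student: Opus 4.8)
The plan is to reduce the inclusion defining the graphical derivative to an exact Fenchel--Young identity and then pass to the limit. First I would unpack the hypothesis $z\in D\partial\varphi(\ox|\,\ov)(w)$: since $(w,z)\in T_{\gph\partial\varphi}(\ox,\ov)$, the definition of the contingent cone produces $t_k\dn 0$ and $(w_k,z_k)\to(w,z)$ with $\ov+t_kz_k\in\partial\varphi(\ox+t_kw_k)$. Writing $x_k:=\ox+t_kw_k$ and $v_k:=\ov+t_kz_k$, the subgradient relation $v_k\in\partial\varphi(x_k)$ is equivalent to the Fenchel--Young equality $\varphi(x_k)+\varphi^*(v_k)=\la x_k,v_k\ra$, and $\ov\in\partial\varphi(\ox)$ likewise gives $\varphi(\ox)+\varphi^*(\ov)=\la\ox,\ov\ra$.

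Subtracting the two equalities and expanding $\la x_k,v_k\ra=\la\ox,\ov\ra+t_k\la\ov,w_k\ra+t_k\la\ox,z_k\ra+t_k^2\la w_k,z_k\ra$, I would regroup so the primal and dual increments separate, reaching the identity
\[
\frac{\varphi(x_k)-\varphi(\ox)-t_k\la\ov,w_k\ra}{\frac12 t_k^2}+\frac{\varphi^*(v_k)-\varphi^*(\ov)-t_k\la\ox,z_k\ra}{\frac12 t_k^2}=2\la w_k,z_k\ra.
\]
Call the two fractions $A_k$ and $B_k$. Then $A_k$ is precisely the difference quotient from \eqref{def:SS} defining $d^2\varphi(\ox|\,\ov)(w)$ evaluated along the admissible sequence $(w_k,t_k)$, and $B_k$ is the analogous quotient for $d^2\varphi^*(\ov|\,\ox)(z)$ along $(z_k,t_k)$. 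Here I would note that $\ov\in\partial\varphi(\ox)$ forces $\ox\in\partial\varphi^*(\ov)$ (via $\varphi^{**}\le\varphi$ together with Fenchel--Young), so $B_k$ is indeed the correct dual object.

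The final step is the limit. Because $(w_k,t_k)\to(w,0^+)$ is one admissible sequence in the liminf defining the second subderivative, I obtain $\liminf_k A_k\ge d^2\varphi(\ox|\,\ov)(w)$ and $\liminf_k B_k\ge d^2\varphi^*(\ov|\,\ox)(z)$, while the right-hand side converges, $2\la w_k,z_k\ra\to 2\la w,z\ra$. Combining these through the superadditivity $\liminf_k(A_k+B_k)\ge\liminf_kA_k+\liminf_kB_k$ yields $2\la z,w\ra\ge d^2\varphi(\ox|\,\ov)(w)+d^2\varphi^*(\ov|\,\ox)(z)$, as claimed.

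The main obstacle is the bookkeeping in this liminf step, since superadditivity fails at an $\infty-\infty$ indeterminacy. I would dispose of this using convexity, which makes both quotients nonnegative: $A_k\ge 0$ is the subgradient inequality for $\varphi$ at $\ox$ with subgradient $\ov$, and $B_k\ge 0$ the corresponding inequality for $\varphi^*$ at $\ov$ with subgradient $\ox$. Nonnegativity, together with the convergence of $A_k+B_k$ to the finite value $2\la w,z\ra$, forces both $\liminf_k A_k$ and $\liminf_k B_k$ to be finite, so the superadditive estimate is legitimate and the two second subderivatives cannot sum to $+\infty$.
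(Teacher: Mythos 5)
Your proof is correct; note that the paper does not prove this lemma itself but recalls it from \cite[Lemma~2.4]{N24}, and your argument---realizing the tangent vector by a sequence $t_k\dn 0$, $(w_k,z_k)\to(w,z)$, converting the two subgradient inclusions into Fenchel--Young equalities, subtracting and dividing by $\tfrac12 t_k^2$ to obtain $A_k+B_k=2\la w_k,z_k\ra$, and then passing to the liminf---is precisely the standard derivation behind that cited result. Your handling of the potential $\infty-\infty$ issue via the nonnegativity $A_k,B_k\ge 0$ (the subgradient inequalities for $\varphi$ at $\ox$ and for $\varphi^*$ at $\ov$, the latter using $\ox\in\partial\varphi^*(\ov)$) is exactly what makes the superadditivity of the liminf legitimate, so there is no gap.
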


This lemma allows us to show next that the kernels of the subgradient graphical derivative $D\partial \varphi(\ox|\,\ov)$ and the second subderivative  $d^2\varphi(\ox|\,\ov)$ are the same.

\begin{Theorem}[Kernels of the subgradient graphical derivative and the second subderivative]\label{thm:Ker} Let $\varphi:\XX\to \oR$ be a proper convex function and $(\ox,\ov)\in \gph \partial \varphi$. Then we have
\begin{equation}\label{eq:KerS}
   D\partial \varphi^*(\ov|\,\ox)(0)=\Ker D\partial \varphi(\ox|\,\ov)=\Ker d^2\varphi(\ox|\, \ov).
\end{equation}
Consequently, if $\varphi$ satisfies the quadratic growth condition at $\ox$ for $\ov$, then   
\begin{equation}\label{eq:KerSS}
    \Ker d^2\varphi(\ox|\, \ov)=T_{\partial \varphi^*(\ov)}(\ox).
\end{equation}

\end{Theorem}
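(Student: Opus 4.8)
The plan is to establish the two equalities in \eqref{eq:KerS} one at a time and then deduce \eqref{eq:KerSS} directly from Lemma~\ref{lem:Ker}. For the first equality I would exploit the conjugacy symmetry $\gph\partial\varphi^{*}=\{(v,x)\mid (x,v)\in\gph\partial\varphi\}$, which holds because the subdifferentials of $\varphi$ and $\varphi^{*}$ are inverse to each other, i.e. $\ov\in\partial\varphi(\ox)\Leftrightarrow\ox\in\partial\varphi^{*}(\ov)$. Since the coordinate swap $(x,v)\mapsto(v,x)$ is a linear isomorphism, it commutes with the tangent cone in \eqref{eq:Tg}, so $(0,w)\in T_{\gph\partial\varphi^{*}}(\ov,\ox)$ if and only if $(w,0)\in T_{\gph\partial\varphi}(\ox,\ov)$. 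Unwinding definition \eqref{def:GD}, this reads $w\in D\partial\varphi^{*}(\ov|\,\ox)(0)$ iff $0\in D\partial\varphi(\ox|\,\ov)(w)$, which is precisely $D\partial\varphi^{*}(\ov|\,\ox)(0)=\Ker D\partial\varphi(\ox|\,\ov)$.

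For the inclusion $\Ker D\partial\varphi(\ox|\,\ov)\subset\Ker d^{2}\varphi(\ox|\,\ov)$ I would feed $z=0$ into Lemma~\ref{lem:dual}: if $0\in D\partial\varphi(\ox|\,\ov)(w)$, then \eqref{eq:InDC} yields $0=2\la 0,w\ra\ge d^{2}\varphi(\ox|\,\ov)(w)+d^{2}\varphi^{*}(\ov|\,\ox)(0)$. The subgradient inequality at $(\ox,\ov)$ makes every difference quotient in \eqref{def:SS} nonnegative, so $d^{2}\varphi(\ox|\,\ov)(w)\ge 0$ and likewise $d^{2}\varphi^{*}(\ov|\,\ox)(0)\ge 0$ (taking $w'=0$ in \eqref{def:SS} shows it in fact equals $0$). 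Hence $d^{2}\varphi(\ox|\,\ov)(w)=0$, i.e. $w\in\Ker d^{2}\varphi(\ox|\,\ov)$.

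The reverse inclusion $\Ker d^{2}\varphi(\ox|\,\ov)\subset\Ker D\partial\varphi(\ox|\,\ov)$ is where the real work lies, since Lemma~\ref{lem:dual} only bounds $2\la z,w\ra$ from below and cannot by itself produce a subgradient; I expect this to be the main obstacle. My plan is an Ekeland variational-principle construction. Fix $w$ with $d^{2}\varphi(\ox|\,\ov)(w)=0$ and, using that the liminf in \eqref{def:SS} is attained along a sequence, choose $t_{k}\dn 0$ and $w_{k}\to w$ with $\ve_{k}:=\varphi(\ox+t_{k}w_{k})-\varphi(\ox)-t_{k}\la\ov,w_{k}\ra\ge 0$ and $\ve_{k}=o(t_{k}^{2})$. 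Then $\ox+t_{k}w_{k}$ minimizes the proper l.s.c. convex function $h:=\varphi-\la\ov,\cdot\ra$ up to the error $\ve_{k}$, since $\min h=h(\ox)$. Applying Ekeland's principle with radius $\lambda_{k}=\sqrt{\ve_{k}}$ produces $\tilde x_{k}$ with $\|\tilde x_{k}-(\ox+t_{k}w_{k})\|\le\sqrt{\ve_{k}}$ and $0\in\partial h(\tilde x_{k})+\sqrt{\ve_{k}}\B$, i.e. $\ov+r_{k}\in\partial\varphi(\tilde x_{k})$ with $\|r_{k}\|\le\sqrt{\ve_{k}}$. Writing $\tilde x_{k}=\ox+t_{k}\tilde w_{k}$ and $\ov+r_{k}=\ov+t_{k}\tilde z_{k}$, the calibration $\sqrt{\ve_{k}}=o(t_{k})$ (which is exactly what $\ve_{k}=o(t_{k}^{2})$ delivers) gives $\tilde w_{k}\to w$ and $\tilde z_{k}\to 0$, so $(w,0)\in T_{\gph\partial\varphi}(\ox,\ov)$ and $0\in D\partial\varphi(\ox|\,\ov)(w)$. (When $\ve_{k}=0$ the point $\ox+t_{k}w_{k}$ already satisfies $\ov\in\partial\varphi(\ox+t_{k}w_{k})$, giving the same conclusion with $\tilde z_{k}=0$.)

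Finally, \eqref{eq:KerSS} is immediate. Once $\varphi$ satisfies the quadratic growth condition at $\ox$ for $\ov$, Lemma~\ref{lem:Ker} supplies $\Ker D\partial\varphi(\ox|\,\ov)=T_{\partial\varphi^{*}(\ov)}(\ox)$ through \eqref{eq:Ker}; combining this with the identity $\Ker d^{2}\varphi(\ox|\,\ov)=\Ker D\partial\varphi(\ox|\,\ov)$ just established yields $\Ker d^{2}\varphi(\ox|\,\ov)=T_{\partial\varphi^{*}(\ov)}(\ox)$, as claimed.
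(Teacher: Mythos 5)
Your proposal is correct and follows essentially the same route as the paper: the first equality via the inverse relation $\partial\varphi^{*}=(\partial\varphi)^{-1}$, the inclusion into $\Ker d^{2}\varphi(\ox|\,\ov)$ via Lemma~\ref{lem:dual} with $z=0$ and nonnegativity of the second subderivatives, and the reverse inclusion via a variational principle applied to $\varphi-\la\ov,\cdot\ra$ with the calibration $\sqrt{o(t_k^2)}=o(t_k)$. The only cosmetic difference is that you invoke Ekeland's principle directly where the paper cites the Br{\o}ndsted--Rockafellar theorem; for a proper l.s.c.\ convex function these yield the same approximate-minimizer-with-nearby-subgradient conclusion.
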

\begin{proof}
The first equality in \eqref{eq:KerS} is trivial due to the fact that $\partial \varphi^*=(\partial \varphi)^{-1}$ and that
\[
z\in D\partial\varphi(\ox|\,\ov)(w)\quad \mbox{if and only if}\quad w\in D(\partial \varphi)^{-1}(\ov|\,\ox)(z).
\]
Next we prove the implication ``$\subset$'' for the second equality in \eqref{eq:KerS}. Pick any $w\in \Ker D\partial \varphi(\ox|\,\ov)$, i.e., $0\in D\partial \varphi(\ox|\,\ov)(w)$. By Lemma~\ref{lem:dual}, we have 
\[
0\ge d^2\varphi(\ox|\, \ov)(w)+d^2\varphi^*(\ov|\, \ox)(0).
\]
Since both $\varphi$ and $\varphi^*$ are convex functions, we have   $d^2\varphi(\ox|\,\ov)(w),d^2\varphi^*(\ov|\,\ox)(0)\ge 0$. This implies that  
\begin{equation}\label{eq:hh*}
d^2\varphi(\ox|\,\ov)(w)= 0.
\end{equation}
which clearly verifies the implication ``$\subset$'' in \eqref{eq:KerS}. To ensure the converse implication ``$\supset$'' in \eqref{eq:KerS}, pick any $w\in \Ker d^2\varphi(\ox|\, \ov)$ satisfying \eqref{eq:hh*}.  Thus, there exist $t_k\dn 0$ and $w_k\to w$ such that 
\[
\varphi(\ox+t_kw_k)-\varphi(\ox)-t_k\la \ov,w_k\ra=o(t_k^2). 
\]
Since $\ov\in \partial \varphi(\ox)$, the latter gives us that 
\[
\varphi(\ox+t_kw_k)-\la \ov, \ox+t_kw_k\ra=\varphi(\ox)-\la \ov,\ox\ra+o(t_k^2)=\inf_{x\in \XX} \varphi(x)-\la \ov,x\ra+o(t_k^2). 
\]
By the classical Br{\o}ndsted-Rockafellar variational principle \cite{BR65}, there exists some $u_k\in \XX$ such that 
\begin{equation}\label{eq:Eke}
    \|u_k-\ox-t_kw_k\|\le \sqrt{o(t^2_k)}\quad \mbox{and}\quad d(\ov;\partial \varphi(u_k))\le \sqrt{o(t^2_k)}.
\end{equation}
We may write $u_k=\ox+t_kw_k^\prime$ with some $w_k^\prime \in \XX$ satisfying $\|w^\prime_k-w_k\|\to 0$ and find some $z_k\to 0$ such that $\ov+t_kz_k\in \partial \varphi(u_k)=\partial \varphi(\ox+t_kw_k^\prime)$. It follows that $0\in D\partial \varphi(\ox|\,\ov)(w)$, i.e., $w\in \Ker D\partial \varphi(\ox|\,\ov). $  This verifies \eqref{eq:KerS}. 

Finally, suppose that $\varphi$ satisfies the quadratic growth condition at $\ox$ for $\ov$. Then $\partial \varphi$ is metrically subregular at $\ox$ for $\ov$. Formula \eqref{eq:KerSS} follows from \eqref{eq:KerD} with $F=\partial \varphi$ and \eqref{eq:KerS}.
\end{proof}

Note that the formula \eqref{eq:hh*} was obtained in \cite[Lemma~3.2]{FNP23} with a different proof. Before establishing the main result of this section, the zero-product property of subgradient graphical derivative for convex functions, we introduce the so-called {\em primal-dual} quadratic growth condition for convex functions as follows.   

\begin{Definition}[Primal-dual quadratic growth condition] Let $\varphi:\XX\to \oR$ be a proper l.s.c. convex function with $\ox\in \dom \varphi$ and $\ov\in \partial \varphi(\ox)$. We say $\varphi$ to satisfy the primal-dual quadratic growth condition at $\ox$ for $\ov$ if it satisfies the quadratic growth condition at $\ox$ for $\ov$ and its Fenchel conjugate $\varphi^*$ also satisfies the quadratic growth condition at $\ov$ for $\ox$.
\end{Definition}

\begin{Remark}\label{rem:QL}{\rm  %From the equivalence between the quadratic growth condition and the metric subregularity of the subgradient/subdifferential mapping for convex functions \cite{AG08,ZT95}, the l.s.c. convex function $\varphi$ satisfy the primal-dual quadratic growth condition at $\ox\in \dom \varphi$ for $\ov\in \partial \varphi(\ox)$ if and only if the distances $d(x;\partial \varphi^*(\ov))$ and $d(\ov;\partial \varphi^*(\ov))$

Let us provide here a few important convex functions $\varphi$ in optimization that satisfy the primal-dual quadratic growth condition at $\ox\in \dom \varphi$ for any $\ov\in \partial \varphi(\ox)$. As discussed before \eqref{eq:QGC}, the function $\varphi$  satisfies the primal-dual quadratic growth condition at $\ox$ for $\ov$ if and only if $\partial \varphi$ is metrically subregular at $\ox$ for $\ov$ and $\partial\varphi^*$ metrically subregular at $\ov$ for $\ox$. 

(a) (Convex piecewise linear-quadratic functions \cite[Definition~10.20]{RW98}). The proper l.s.c. convex function $\varphi:\R^n\to \oR$ is called  {\em convex piecewise linear-quadratic} if $\dom \varphi$ is a union of finitely many polyhedral sets, for  each of which the function $\varphi(x)$ has a quadratic expression $\frac{1}{2}\la Ax,x\ra+\la b,x\ra+c$ for some scalar $c\in \R$, vector $b\in \R^n$, and positive semi-definite $n\times n$ matrix $A$.  \cite[Theorem~11.14 and Proposition~12.30]{RW98} tell us that both $\partial \varphi$ and $\partial \varphi^*$ are {\em polyhedral} mappings in the sense that their graphs are unions of finitely many  polyhedral convex sets in $\R^n\times \R^n.$ Moreover, it is well-known that the inverse of any polyhedral mapping is metrically subregular at any point on its graph (see, e.g., \cite[Proposition~3H.1]{DR09}). Hence, both $\partial \varphi$ and $\partial \varphi^*$ are metrically subregular at any point on their graphs, which means $\varphi$ satisfies the primal-dual quadratic growth condition at any $\ox\in \dom \varphi$ for any $\ov\in \partial \varphi(\ox)$. 

%The class of convex piecewise linear-quadratic functions covers convex piecewise linear functions that appears in many optimization/machine learning models such as the $\ell_1$ norm, $\ell_\infty$ norm, the 

(b) (The analysis group Lasso regularizer) The $\ell_1/\ell_2$ norm function $\varphi(x)=\|x\|_{1,2}$, $x\in \R^n$ (a.k.a. the analysis group Lasso regularizer) is defined by
\begin{equation}\label{eq:GLasso}
\|x\|_{1,2}:=\sum_{J\in \mathcal{J}}\|x_J\|\quad \mbox{for any}\quad x\in \R^n.
\end{equation}
Here, the index set $\mathcal{J}$ represents for a partition of $\{1,2,\ldots,n\}$. For each $J\in \mathcal{J}$, $x_J\in \R^{|J|}$ is the  vector containing all components $x_j$, $j\in J$ in $x$. \cite[Proposition 9]{ZS17} tells us that $\partial\|\cdot\|_{1,2}$ is metrically subregular at any $\ox\in \R^n$ for any $\ov\in \partial\|\cdot\|_{1,2}(\ox)$. Thus,  $\|\cdot\|_{1,2}$ satisfies the quadratic growth condition at $\ox$ for $\ov$. Next, we claim that $\varphi^*$ also satisfies the quadratic growth condition at $\ov$ for $\ox\in \partial \|\cdot\|^*_{1,2}(\ov)$ by indicating some $\kappa>0$ such that for any $v\in \R^n$
\begin{equation}\label{ine:f}
 \varphi^*(v)\ge\varphi^*(\ov)+\la\ox,v-\ov \ra + \kappa \left[d(v;\partial \varphi (\ox))\right]^2.
\end{equation}
Note that the Fenchel conjugate of $\ell_1/\ell_2$ norm is the  indicator function to 
\[
\B_{\infty,2}:=\{v\in \R^n\,|\, \|v_J\|\le 1\, , J\in \mathcal{J} \}
\]
and  the subdifferential of the $\ell_1/\ell_2$ norm at $\ox$ is
\begin{equation}\label{sub:l1l2}
 \partial \|\cdot\|_{1,2}(\ox)= \left(\prod_{J\in \mathcal{I}}\frac{\ox_J}{\|\ox_J\|}\times\prod_{J\in \mathcal{I}^c}\B_J \right),
\end{equation}
where 
\[
\mathcal{I}:=\{J\in \mathcal{J}\,|\, \ox_J \neq 0\},\quad \mathcal{I}^c:=\mathcal{J}\backslash\mathcal{I}\quad \mbox{and}\quad \B_J:=\left\{x\in \R^{|J|}|\, \|x\|\le 1\right\}.
\]
If $v\notin \B_{\infty,2}$,  \eqref{ine:f} is trivially true, as $\varphi^*(v)=\infty$. It suffices to consider $v \in \B_{\infty,2}$. As $\ov\in \partial \|\cdot\|_{1,2}(\ox)$, it follows from \eqref{sub:l1l2} that 
\[
\ov_J= \frac{\ox_J}{\|\ox_J\|}\quad \forall J\in \mathcal{I} \quad \mbox{and} \quad \|\ov_J\|\le1 \quad \forall J\in \mathcal{I}^c.
\]
By \eqref{sub:l1l2}, inequality \eqref{ine:f} is equivalent to
\begin{equation}\label{ine:f2}
   0\ge  \la \ox,v-\ov \ra+\kappa \sum_{J\in \mathcal{I}}\|v_J-\ov_J\|^2=\sum_{J\in \mathcal{I}}\left(\la \ox_J,v_J-\ov_J \ra+\kappa \|v_J-\ov_J\|^2\right)
\end{equation}
Note that 
\[
1\ge \|v_J\|^2=\|v_J-\ov_J+\ov_J\|^2=\|v_J-\ov_J\|^2+2\la v_J-\ov_J,\ov_J\ra+\|\ov_J\|^2, 
\]
which yields 
\begin{equation}\label{eq:tri}
   \frac{1}{\|\ox_J\|}\la \ox_J,v_J-\ov_J\ra=  \la \ov_J,v_J-\ov_J\ra \le-\frac{1}{2}\|v_J-\ov_J\|^2. 
\end{equation}
Define $\kk:=\frac{1}{2}\min\{\|\ox_J\||\, J\in \mathcal{I}\}$. Then we obtain from the latter that 
\[
\la \ox_J,v_J-\ov_J\ra\le -\kk \|v_J-\ov_J\|^2\quad \mbox{for any}\quad J\in \mathcal{I},
\]
which clearly verifies \eqref{ine:f2} and thus \eqref{ine:f}.

(c) (Convex spectral functions \cite[Definition~5.2]{LS05a}) A spectral function $\varphi$ is an extended-real-value function defined on $\R^{m\times n}$ ($m\le n$) of the form $f\circ \sigma(X)$ for $X\in\R^{m\times n} $ where  $\sigma(X)=(\sigma_1(X),\sigma_2(X),\ldots,\sigma_m(X))^T$ of {\em singular values} of $X$: $\sigma_1(X)\ge\sigma_2(X)\ge\ldots\ge\sigma_m(X)\ge 0$ from its {\em singular value decomposition}
\begin{equation}\label{eq:SVD}
    X=U\begin{pmatrix}\sigma_1(X)&\ldots&0&0&\ldots& 0\\
0&\ddots&0&0&\ldots& 0\\ 
0&\ldots &\sigma_{m}(X)&0&\ldots 
&0\end{pmatrix}_{m\times n}V^T
\end{equation}
with orthogonal matrices $U\in \R^{m\times m}$ and $V\in \R^{n\times n}$, and $f:\R^m \to \oR$ is a l.s.c. convex function that is {\em absolutely symmetric}  in the sense that  
\[
f(x)=f(|x_{i_1}|,\ldots, |x_{i_m}|)\quad \mbox{for any}\quad x\in \R^m
\]
with any permutation $(i_1,\ldots,i_m)$ of $(1, \ldots,m)$. With $\varphi(X)=f(\sigma(X))$, it is well-known that $\varphi^*=f^*\circ \sigma$. Given a pair $(\OX,\OY)\in \gph \partial \varphi$, it follows from \cite[Proposition~3.8]{CDZ17} that $\partial \varphi$ satisfies the primal-dual quadratic growth condition at $\OX$ for $\OY$, if $f$ satisfies the primal-dual growth condition at $\sigma(\OX)$ for $\sigma(\OY)$. A particular case is that $f$ is a convex piecewise linear-quadratic function as discussed in (a). An important spectral function is the nuclear norm, which is usually used in optimization to promote the low-rank property for optimal solutions, defined by
\[
\|X\|_*:=\sigma_1(X)+\ldots+\sigma_m(X)=\|\sigma(X)\|_1.
\]
In this case, the outer function $f(x)$ is the $\ell_1$ norm, which is a convex piecewise linear function. Thus, the nuclear norm satisfies the primal-dual quadratic growth condition at any point on its graph. \endproof

}
\end{Remark}

\begin{Theorem}[Zero-product property of subgradient graphical derivative]\label{thm:Zero} Let $\varphi:\XX\to \oR$ be a proper convex function and $(\ox,\ov)\in \gph \partial \varphi$. For any $z\in D\partial  \varphi(\ox|\, \ov)(w)$, we have
\begin{equation}\label{eq:ZeroSD}
\la z,w\ra=0\quad \Longrightarrow\quad  z\in  D\partial  \varphi(\ox|\, \ov)(0)\;\;\mbox{and}\;\;w\in D\partial  \varphi^*(\ov|\,\ox)(0).
\end{equation} 
The opposite implication holds when one of the following conditions is satisfied:
\begin{itemize}
 \item [{\bf (i)}] $ \varphi$ is twice epi-differentiable at $\ox$ for $\ov$,
 \item [{\bf (ii)}] $ \varphi$ satisfies the primal-dual quadratic growth condition  at $\ox$ for $\ov$.
\end{itemize}
Consequently, if $ \varphi$ satisfies the primal-dual quadratic growth condition at $\ox$ for $\ov$,  then for any $z\in D\partial  \varphi(\ox|\ov)(w)$ we have
\begin{equation}\label{eq:Zero}
\la z,w\ra=0\quad \mbox{if and only if }\quad  z\in T_{\partial  \varphi(\ox)}(\ov)\;\mbox{and}\;w\in T_{\partial  \varphi^*(\ov)}(\ox).
\end{equation} 
\end{Theorem}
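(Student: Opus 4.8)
The plan is to prove the two implications of \eqref{eq:ZeroSD} separately and then read off \eqref{eq:Zero} as a corollary. For the forward implication I would start from Lemma~\ref{lem:dual}: since $z\in D\partial\varphi(\ox|\,\ov)(w)$ with $\la z,w\ra=0$, inequality \eqref{eq:InDC} gives $0\ge d^2\varphi(\ox|\,\ov)(w)+d^2\varphi^*(\ov|\,\ox)(z)$. Both summands are nonnegative by convexity (exactly as recorded in the proof of Theorem~\ref{thm:Ker}), so each must vanish: $d^2\varphi(\ox|\,\ov)(w)=0$ and $d^2\varphi^*(\ov|\,\ox)(z)=0$. Feeding these into the kernel identity \eqref{eq:KerS}—applied to $\varphi$ for the first and to $\varphi^*$ at $(\ov,\ox)$ for the second—yields $w\in\Ker D\partial\varphi(\ox|\,\ov)$ and $z\in\Ker D\partial\varphi^*(\ov|\,\ox)$. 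The graph-reflection symmetry $z\in D\partial\varphi(\ox|\,\ov)(w)\Leftrightarrow w\in D\partial\varphi^*(\ov|\,\ox)(z)$ (already used inside Theorem~\ref{thm:Ker}) rewrites these as $w\in D\partial\varphi^*(\ov|\,\ox)(0)$ and $z\in D\partial\varphi(\ox|\,\ov)(0)$, which is the claim. This direction needs no extra hypothesis.

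For the converse under (i), the essential obstacle is that Lemma~\ref{lem:dual} bounds $\la z,w\ra$ only from \emph{below}, whereas I now need an upper bound. Twice epi-differentiability supplies it: a convex function is prox-regular and subdifferentially continuous, so \cite[Theorem~13.40]{RW98} gives $D\partial\varphi(\ox|\,\ov)=\partial q$ with $q:=\tfrac12 d^2\varphi(\ox|\,\ov)$, a proper l.s.c. convex function that is positively homogeneous of degree two and satisfies $q\ge 0$, $q(0)=0$. From $z\in D\partial\varphi(\ox|\,\ov)(w)=\partial q(w)$ and Euler's relation for degree-two homogeneous convex functions—test the subgradient inequality $q(\lambda w)\ge q(w)+(\lambda-1)\la z,w\ra$ against $q(\lambda w)=\lambda^2 q(w)$ and let $\lambda\to 1$ from either side—I obtain $\la z,w\ra=2q(w)$. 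On the other hand $z\in D\partial\varphi(\ox|\,\ov)(0)=\partial q(0)$ forces $q(w)\ge q(0)+\la z,w\ra=\la z,w\ra$. Combining the two gives $q(w)\ge 2q(w)$, hence $q(w)=0$ and $\la z,w\ra=0$.

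For the converse under (ii) I would argue directly by monotonicity. Using metric subregularity of $\partial\varphi^*$ (from the quadratic growth of $\varphi^*$) together with \eqref{eq:KerD}, the hypothesis $z\in D\partial\varphi(\ox|\,\ov)(0)=\Ker D\partial\varphi^*(\ov|\,\ox)$ becomes $z\in T_{\partial\varphi(\ox)}(\ov)$; symmetrically, metric subregularity of $\partial\varphi$ turns $w\in D\partial\varphi^*(\ov|\,\ox)(0)=\Ker D\partial\varphi(\ox|\,\ov)$ into $w\in T_{\partial\varphi^*(\ov)}(\ox)$. By the tangent-cone definition \eqref{eq:Tg} there exist $t_k\dn 0$, $z_k\to z$ with $\ov+t_kz_k\in\partial\varphi(\ox)$ and $s_j\dn 0$, $w_j\to w$ with $\ov\in\partial\varphi(\ox+s_jw_j)$. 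Monotonicity of $\partial\varphi$ between the graph points $(\ox,\ov+t_kz_k)$ and $(\ox+s_jw_j,\ov)$ gives $\la t_kz_k,-s_jw_j\ra\ge 0$, i.e. $\la z_k,w_j\ra\le 0$, so in the limit $\la z,w\ra\le 0$. Since the positive semidefiniteness \eqref{eq:SPD0} (which uses $z\in D\partial\varphi(\ox|\,\ov)(w)$) gives $\la z,w\ra\ge 0$, we conclude $\la z,w\ra=0$.

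Finally, the equivalence \eqref{eq:Zero} follows by inserting the tangent-cone identifications just made into the main statement: under primal-dual quadratic growth, the symmetry and \eqref{eq:KerD} yield $D\partial\varphi(\ox|\,\ov)(0)=\Ker D\partial\varphi^*(\ov|\,\ox)=T_{\partial\varphi(\ox)}(\ov)$ and $D\partial\varphi^*(\ov|\,\ox)(0)=\Ker D\partial\varphi(\ox|\,\ov)=T_{\partial\varphi^*(\ov)}(\ox)$, so the right-hand side of \eqref{eq:ZeroSD} coincides verbatim with that of \eqref{eq:Zero}. I expect the main difficulty to be the backward implication, precisely because Lemma~\ref{lem:dual} points the wrong way; the resolution is to manufacture the missing upper estimate on $\la z,w\ra$ by two distinct mechanisms—the Euler identity for the degree-two model $q$ under (i) and a bare monotonicity estimate along $\gph\partial\varphi$ under (ii). A secondary technical point is the lower semicontinuity/prox-regularity implicit in invoking $D\partial\varphi=\partial(\tfrac12 d^2\varphi)$, which is automatic for convex $\varphi$ and does not affect $\partial\varphi$, $d^2\varphi$, or their graphical derivatives near $(\ox,\ov)$.
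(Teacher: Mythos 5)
Your proposal is correct, and three of its four pieces (the forward implication via Lemma~\ref{lem:dual} and Theorem~\ref{thm:Ker}, the converse under the primal--dual quadratic growth condition via tangent-cone representatives plus monotonicity plus \eqref{eq:SPD0}, and the final identification \eqref{eq:Zero} via Lemma~\ref{lem:Ker}) follow essentially the same route as the paper; your monotonicity inequality between the graph points $(\ox,\ov+t_kz_k)$ and $(\ox+s_jw_j,\ov)$ is just the paper's pair of subgradient/Fenchel-equality estimates written in one line. The genuine divergence is in case (i): the paper simply cites \cite[Lemma~3.6]{CHNT21} for the identity $\la z,w\ra=d^2\varphi(\ox|\,\ov)(w)$ and then concludes from $w\in\Ker d^2\varphi(\ox|\,\ov)$, whereas you re-derive that identity from scratch by invoking $D\partial\varphi(\ox|\,\ov)=\partial\bigl(\tfrac12 d^2\varphi(\ox|\,\ov)\bigr)$ (\cite[Theorem~13.40]{RW98}, legitimate here since convex l.s.c.\ functions are prox-regular and subdifferentially continuous) together with Euler's relation for degree-two positively homogeneous convex functions, and then close the loop using only the hypothesis $z\in\partial q(0)$ rather than the kernel information on $w$. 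This buys a self-contained argument that avoids the external lemma and, as a minor bonus, shows that under (i) the single hypothesis $z\in D\partial\varphi(\ox|\,\ov)(0)$ already suffices for the converse; the cost is the heavier appeal to the proto-differentiability machinery of \cite[Theorem~13.40]{RW98}, which the paper's citation encapsulates.
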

\begin{proof}
Let us prove the implication ``$\Rightarrow$'' in \eqref{eq:ZeroSD} by taking any pair $(z,w)\in \XX\times \XX$ satisfying $z\in D\partial\varphi(\ox|\,\ov)(w)$ and  $\la z,w\ra=0$. By Lemma~\ref{lem:dual}, we have 
\[
0\ge d^2\varphi(\ox|\ov)(w)+d^2\varphi^*(\ov|\ox)(z).
\]
Since both $\varphi$ and $\varphi^*$ are convex functions, we have $d^2\varphi(\ox|\ov)(w), d^2\varphi^*(\ov|\ox)(z)\ge 0$. It follows that $w\in \Ker d^2\varphi(\ox|\ov)$  and $z\in \Ker d^2\varphi^*(\ov|\ox)$. Applying Theorem~\ref{thm:Ker} for convex functions $\varphi$ and $\varphi^*$ gives us \eqref{eq:ZeroSD} immediately.

Next, let us verify the converse ``$\Leftarrow$'' implication in \eqref{eq:ZeroSD} when either {\bf (i)} or {\bf (ii)} holds. Pick any pair $(z,w)\in \XX\times \XX$ satisfying $z\in D\partial \varphi(\ox|\,\ov)(w)$ with  $z\in D\partial \varphi(\ox|\,\ov)(0)$ and $w\in D\partial \varphi^*(\ov|\,\ox)(0)$. 

{\bf Case I:} $\varphi$ is twice epi-differentiable at $\ox$ for $\ov$. In this case, we know from \cite[Lemma~3.6]{CHNT21} that 
\[
\la z,w\ra=d^2 \varphi(\ox|\,\ov)(w)\quad \mbox{for any}\quad z\in D\partial \varphi(\ox|\,\ov)(w),
\]
which clearly implies that $\la z,w\ra=0$ due to formula \eqref{eq:KerS} that gives $w\in \Ker d^2 \varphi(\ox|\,\ov)$.

{\bf Case II:} $\varphi$ satisfies the primal-dual quadratic growth condition at $\ox$ for $\ov$. As $w\in \Ker D\partial \varphi(\ox|\,\ov)$ and $z\in \Ker D\partial \varphi^*(\ov|\,\ox)$, it follows from Lemma~\ref{lem:Ker} that 
\begin{equation}\label{eq:zwT}
z\in T_{\partial \varphi(\ox)}(\ov)\quad \mbox{and}\quad  w\in T_{\partial \varphi^*(\ov)}(\ox).
\end{equation}
By the definition of tangent cone \eqref{eq:Tg}, we find sequences $r_k,s_k\dn 0$ and $(w_k,z_k)\to (w,z)$ such that $\ox+r_kw_k\in \partial \varphi^*(\ov)$ and $\ov+s_kz_k\in \partial \varphi (\ox)$. 
This leads us to
\begin{equation*}
\la \ov,\ox+r_kw_k\ra-\varphi(\ox+r_kw_k)=\varphi^*(\ov)=\la \ov, \ox\ra-\varphi(\ox),
\end{equation*}
which implies that $\varphi(\ox+r_kw_k)-\varphi(\ox)=r_k\la \ov,w_k\ra$.
Since $\ov+s_kz_k\in \partial\varphi(\ox)$, we have
\[
r_k\la \ov,w_k\ra=\varphi(\ox+r_kw_k)-\varphi(\ox)\ge \la \ov+s_kz_k, \ox+r_kw_k-\ox\ra=r_k\la \ov+s_kz_k,w_k\ra.
\]
This gives us that $r_ks_k\la z_k,w_k\ra\le 0$, which means $\la z_k,w_k\ra\le 0$. Letting $k\to \infty$ yields $\la z,w\ra\le 0$. Moreover, $\la z,w\ra\ge0$ by \eqref{eq:SPD0}. We have $\la z,w\ra=0$. This verifies \eqref{eq:ZeroSD} as well as the equivalence in \eqref{eq:Zero} by Lemma~\ref{lem:Ker}.  The proof is complete. 
\end{proof}

\section{Isolated Calmness for Variational Convex Systems}
\setcounter{equation}{0}
Let us start to consider the following parametric optimization problem, the more general version of  \eqref{eq:MLS}, 
\begin{equation}\label{p:PO}
    \min_{x\in \XX}\quad f(x,p)+g(Kx),
\end{equation}
where $p$ is a parametric variable in a Euclidean space $\mathbb{P}$, $K:\XX\to\YY$ is a linear operator between two Euclidean spaces $\XX$ and $\YY$, $f:\XX\times \mathbb{P}\to \R$ is a continuous twice-differentiable function that is  convex with respect to $x$, and $g:\YY\to \R\cup\{\infty\}$ is a proper l.s.c. convex function. The solution mapping of problem~\eqref{p:PO} is denoted by
\begin{equation}\label{eq:Sp0}
    S(p):={\rm argmin}\{f(x,p)+g(Kx)|\; x\in \XX\}. 
\end{equation}
For a fixed $\op\in \mathbb{P}$, suppose that $S(\op)\neq \emptyset$ and $\ox\in S(\op)$. For any $x\in S(p)$, as $f(x,p)$ is continuously differentiable, it follows from the Fermat rule that 
\begin{equation}
0\in \nabla_x f(x,p)+\partial h(x)\quad \mbox{with}\quad h(x):=g(Kx).
\end{equation}
Since we only need to study the solution mapping $S(p)$ around $\ox$ and use the {\em chain rule} for $\partial h(\ox)$ to extract information of $g$, the following constraint qualification is supposed throughout this section at $\ox$:
\begin{equation}\label{eq:RCQ}
0\in {\rm int}\left(K\ox+\Im K-\dom g\right)\quad \mbox{or equivalently}\quad \Im K+T_{{\rm dom}\, g}(K\bar x)=\YY,
\end{equation}
which is known as the {\em Robinson Constraint Qualification} (RCQ) for composite model \cite[Section~3.4]{BS00}. This condition holds trivially if the function $g$ is continuous on $\YY$ with full domain. Under this condition, there exists some neighborhood $U$ of $\ox$ such that 
\begin{equation}\label{eq:CR}
\partial h(x)=K^*\partial g(Kx)\quad \mbox{for all}\quad x\in U,
\end{equation}
which is  the  chain rule for composite convex functions; see, e.g., \cite[Theorem~2.168]{BS00}. It follows that 
\begin{equation}\label{eq:Sp}
S(p)\cap U=\{x\in U|\; 0\in \nabla_x f(x,p)+K^*\partial g(Kx)\}.
\end{equation}
Its corresponding {\em linearized} solution mapping around the point $(\ox,\op)$ in question is known as 
\begin{equation}\label{eq:BSd}
\check S(v):=\{x\in \XX|\; v\in \nabla_x f(\ox,\op)+\nabla_{xx}^2 f(\ox,\op)(x-\ox)+K^*\partial g(Kx)\}.
\end{equation}
It is well-known that $S$ is isolated calm at $\op$ for $\ox$ provided that $\check S$ is isolated calm at $0$ for $\ox$; see, e.g.,  \cite[Theorem~3I.12 and Theorem~3I.13]{DR09} (a.k.a. the {\em implicit mapping theorem} with strong metric subregularity/isolated calmness.) The isolated calmness of $\check S$ can be characterized via the graphical derivative or the second subderivative as in \eqref{eq:LR02}  and \eqref{eq:SS}.
However, computing these structures for the composite function $g(Kx)$ can be very complicated. In the following result, we establish a simple sufficient condition.  

\begin{Theorem}[Sufficient conditions for the isolated calmness of solution mapping]\label{thm:Suff} Suppose that the function $f$ is twice continuously differentiable around $(\ox,\op)$ with $\ox\in S(\op)$. Then $S$ is isolated calm at $\op$ for $\ox$ provided that 
\begin{equation}\label{eq:Kerh}
        \Ker \nabla^2_{xx}f(\ox,\op)\cap \Ker d^2 h(\ox|\ov)=\{0\}\quad \mbox{with}\quad h(x)=g(Kx)\quad \mbox{and}\quad \ov:=-\nabla_x f(\ox,\op).  
\end{equation}
Let $\oy\in \partial g(K\ox)$ with $K^*\oy=-\nabla_x f(\ox,\op)$. If, additionally, $g$ satisfies the quadratic growth condition at $K\ox$ for $\oy$,  then   $S$ is isolated calm at $\op$ for $\ox$ if 
\begin{equation}\label{eq:KerK}
        \Ker \nabla^2_{xx}f(\ox,\op)\cap K^{-1}\left( T_{\partial g^*(\oy)}(K\ox)\right)=\{0\}. 
\end{equation}
\end{Theorem}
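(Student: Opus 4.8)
The plan is to reduce the isolated calmness of $S$ to that of the linearized mapping $\check S$ in \eqref{eq:BSd} via the implicit mapping theorem \cite[Theorem~3I.12 and Theorem~3I.13]{DR09}, and then to invoke the Levy--Rockafellar criterion \eqref{eq:LR0}. Writing $A:=\nabla^2_{xx}f(\ox,\op)$ and $h:=g\circ K$, the chain rule \eqref{eq:CR} shows that locally around $\ox$ the mapping $\check S$ coincides with $G^{-1}$, where $G(x):=-\ov+A(x-\ox)+\partial h(x)$; note $(\ox,0)\in\gph G$ since $\ox\in S(\op)$ forces $\ov\in\partial h(\ox)$. As $G$ is an affine perturbation of $\partial h$, its graph is the image of $\gph\partial h$ under the linear isomorphism $(x,u)\mapsto(x,u-\ov+A(x-\ox))$ sending $(\ox,\ov)$ to $(\ox,0)$, so transporting tangent cones gives $DG(\ox\,|\,0)(w)=Aw+D\partial h(\ox\,|\,\ov)(w)$. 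Hence, since $\gph G$ is locally closed, isolated calmness of $S$ follows once we prove $\Ker DG(\ox\,|\,0)=\{0\}$.

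For the first assertion, I would take $w\in\Ker DG(\ox\,|\,0)$, i.e. $-Aw\in D\partial h(\ox\,|\,\ov)(w)$. The positive semidefiniteness \eqref{eq:SPD0} of the monotone operator $D\partial h(\ox\,|\,\ov)$ gives $\la -Aw,w\ra\ge0$, while convexity of $f(\cdot,\op)$ makes $A$ positive semidefinite, so $\la Aw,w\ra\ge0$; together these force $\la Aw,w\ra=0$ and therefore $Aw=0$. Then $0\in D\partial h(\ox\,|\,\ov)(w)$, so $w\in\Ker D\partial h(\ox\,|\,\ov)=\Ker d^2h(\ox\,|\,\ov)$ by Theorem~\ref{thm:Ker}. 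Consequently $w\in\Ker A\cap\Ker d^2h(\ox\,|\,\ov)=\{0\}$ by hypothesis \eqref{eq:Kerh}, which yields the isolated calmness of $S$.

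For the second assertion, by the first part it suffices to show that \eqref{eq:KerK} implies \eqref{eq:Kerh}, and for this I would establish the inclusion $\Ker d^2h(\ox\,|\,\ov)\subset K^{-1}\big(T_{\partial g^*(\oy)}(K\ox)\big)$. Take $w$ with $d^2h(\ox\,|\,\ov)(w)=0$; by \eqref{def:SS} there exist $t_k\dn0$ and $w_k\to w$ with $h(\ox+t_kw_k)-h(\ox)-t_k\la\ov,w_k\ra=o(t_k^2)$. Substituting $h=g\circ K$ and $\ov=K^*\oy$ turns this into $g(K\ox+t_kKw_k)-g(K\ox)-t_k\la\oy,Kw_k\ra=o(t_k^2)$, so along the admissible sequence $Kw_k\to Kw$ the defining quotient of $d^2g(K\ox\,|\,\oy)$ tends to $0$; since $g$ is convex its second subderivative is nonnegative, which forces $d^2g(K\ox\,|\,\oy)(Kw)=0$, i.e. $Kw\in\Ker d^2g(K\ox\,|\,\oy)$. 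Because $g$ satisfies the quadratic growth condition at $K\ox$ for $\oy$, formula \eqref{eq:KerSS} gives $\Ker d^2g(K\ox\,|\,\oy)=T_{\partial g^*(\oy)}(K\ox)$, establishing the inclusion. Intersecting with $\Ker A$ and using \eqref{eq:KerK} produces \eqref{eq:Kerh}, and the first part completes the argument.

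The step I expect to demand the most care is the reduction itself: verifying that $\check S=G^{-1}$ locally, that $\gph G$ is locally closed (so both the implicit mapping theorem and the Levy--Rockafellar criterion apply), and that the affine change of variables indeed yields $DG(\ox\,|\,0)(w)=Aw+D\partial h(\ox\,|\,\ov)(w)$. Once this identification is in place, the remaining pieces are short: the monotonicity bound \eqref{eq:SPD0} together with $A\succeq0$ pins down $Aw=0$, and the second-order calculus of Theorem~\ref{thm:Ker} and \eqref{eq:KerSS}, applied to $h$ and to $g$ respectively, converts the kernel conditions into the geometric tangent-cone forms appearing in \eqref{eq:Kerh} and \eqref{eq:KerK}.
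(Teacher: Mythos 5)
Your proposal is correct and follows essentially the same route as the paper: linearize via the implicit mapping theorem, apply the Levy--Rockafellar criterion, split the kernel condition into the positive semidefinite quadratic part and the $h$-part, and prove the second assertion by showing $\Ker d^2h(\ox\,|\,\ov)\subset K^{-1}\bigl(T_{\partial g^*(\oy)}(K\ox)\bigr)$ exactly as the paper does. The only (harmless) variation is in the first part, where the paper verifies $\Ker d^2\Psi(\ox\,|\,0)=\{0\}$ for $\Psi:=$ quadratic model $+\,h$ and then invokes Theorem~\ref{thm:Ker} to pass to $\Ker D\partial\Psi$, whereas you compute $DG(\ox\,|\,0)=A+D\partial h(\ox\,|\,\ov)$ directly and use the monotonicity bound \eqref{eq:SPD0} before applying Theorem~\ref{thm:Ker} to $h$ --- two equivalent ways of organizing the same argument.
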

\begin{proof} Suppose that \eqref{eq:Kerh} is satisfied. Let us claim that $d^2\Psi(\ox|\,0)(w)>0$ for any $w\neq 0$, where 
\[
\Psi(x):= f(\ox,\op)+\la \nabla_x f(\ox,\op),x-\ox\ra+\frac{1}{2}\la \nabla_{xx}^2 f(\ox,\op)(x-\ox),x-\ox\ra+ h(x)\quad \mbox{for}\quad x\in \XX.
\]
Suppose that there is some $w\in \XX$ such that $d^2\Psi(\ox|\,0)(w)\le0$. Note that
\[
0\ge d^2\Psi(\ox|\,0)(w)=\la \nabla^2_{xx}f(\ox,\op) w,w\ra+d^2 h(\ox|\, \ov)(w).
\]
Since $f$ and $h$ are convex functions, we have $\la \nabla^2_{xx}f(\ox,\op) w,w\ra\ge 0$ and $d^2 h(\ox|\, \ov)(w)\ge 0$. It follows that 
\[
0=\la \nabla^2_{xx}f(\ox,\op) w,w\ra=d^2 h(\ox|\, \ov)(w).
\]
Since $\nabla^2_{xx} f(\ox,\op)$ is positive semi-definite, we have   $w\in \Ker \nabla^2_{xx}f(\ox,\op)\cap \Ker d^2 h(\ox|\, \ov)=\{0\}$ by \eqref{eq:Kerh}. Thus $d^2\Psi(\ox|\,0)(w)>0$ for any $w\neq 0$, i.e., $\Ker d^2\Psi(\ox|\,0)=\{0\}$. By Theorem~\ref{thm:Ker}, $\Ker D\partial \Psi(\ox|\,0)=\{0\}$. Thus, $\check S=(\partial \Psi)^{-1}$ is isolated calm at $0$ for $\ox$ by \eqref{eq:LR02}. By the implicit mapping theorem with strong metric subregularity  \cite[Theorem~3I.13]{DR09}, the solution mapping $S$ is isolated calm at $\op$ for $\ox$. 

To prove the second part of the theorem, let us claim that 
\begin{equation}\label{eq:Sub}
\Ker d^2 h(\ox|\, \ov)\subset K^{-1}\left( T_{\partial g^*(\oy)}(K\ox)\right),
\end{equation}
when $g$ satisfies the quadratic growth condition at $K\ox$ for $\oy$. Indeed, for any $w\in\Ker d^2 h(\ox|\, \ov)$, note that 
 \begin{equation}
\begin{aligned}
0=d^2 h(\ox|\,\ov)(w)&=\liminf_{w^\prime\to w, t\dn 0}\dfrac{h(\ox+tw^\prime)-h(\ox)-t\la \ov, w^\prime\ra}{\frac{1}{2}t^2}\\
    &=\liminf_{w^\prime\to w, t\dn 0}\dfrac{g(K\ox+t Kw^\prime)-g(K\ox)-t\la \oy, K w^\prime\ra}{\frac{1}{2}t^2}\\
&\ge d^2 g(K\ox|\; \oy)(Kw)\ge 0.
\end{aligned}
\end{equation}
It follows from \eqref{eq:KerSS} that 
\[
Kw\in \Ker d^2 g(K\ox|\; \oy)=T_{\partial g^*(\oy)}(K\ox).
\]
This clearly verifies \eqref{eq:Sub}. Consequently, condition~\eqref{eq:KerK} implies \eqref{eq:Kerh}. Thus $S$ is isolated calm at $\op$ for $\ox$ when condition~\eqref{eq:KerK} is satisfied. 
\end{proof}

It is natural to ask if condition \eqref{eq:KerK} is also necessary for the isolated calmness. We obtain some affirmative answers for the parametric least-squares regularized problem \eqref{eq:MLS} discussed in the Introduction
\begin{equation}\label{eq:LeS}
 P(b,\mu):\quad   \min_{x\in \XX}\quad \frac{1}{2\mu}\|\Phi x-b\|^2+g(Kx),
\end{equation}
where $\Phi:\XX\to \YY$ is a linear operator and  $\mu>0$ and $b\in \YY$ are two parameters. The solution mapping of problem~\eqref{eq:LeS} (by ignoring a neighborhood $U$ of $\ox$ in \eqref{eq:Sp}) is written by 
\begin{equation}\label{eq:SLS}
    S_{LS}(b,\mu):=\left\{x\in \XX|\; 0\in \frac{1}{\mu}\Phi^*(\Phi x-b)+K^*\partial g(Kx)\right\}. 
\end{equation}
Before establishing the main result of this section, we prove a useful lemma that provides a formula for the tangent cone to the linear inverse image of a closed convex set.
\begin{Lemma}[Tangent cone to linear inverse image of closed convex sets]\label{lem:tan} Let $K:\XX\to \YY$ be a linear operator and $C\subset \YY$ be a closed convex set such that $C\cap\Im K\neq \emptyset$. Suppose that $x_0\in K^{-1}(C)$. Then we have
\begin{equation}\label{eq:Inv}
    T_{K^{-1}(C)}(x_0)=K^{-1}\left(T_{C\cap{\rm Im}\, K}(Kx_0)\right).
\end{equation}
\end{Lemma}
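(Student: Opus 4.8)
The plan is to first reduce to the situation where $C$ lies inside $\Im K$. Since a point $x$ satisfies $Kx\in C$ if and only if $Kx\in C\cap\Im K$, we have $K^{-1}(C)=K^{-1}(D)$ with $D:=C\cap\Im K$, a closed convex subset of $\Im K$ that is nonempty because it contains $Kx_0$; moreover the right-hand side of \eqref{eq:Inv} is exactly $K^{-1}(T_D(Kx_0))$. Thus it suffices to prove $T_{K^{-1}(D)}(x_0)=K^{-1}(T_D(Kx_0))$ under the extra hypothesis $D\subset\Im K$.

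The inclusion ``$\subset$'' is immediate and uses neither convexity nor the extra hypothesis. Given $w\in T_{K^{-1}(D)}(x_0)$, definition \eqref{eq:Tg} supplies $t_k\dn 0$ and $w_k\to w$ with $x_0+t_kw_k\in K^{-1}(D)$, i.e. $Kx_0+t_k(Kw_k)\in D$; since $K$ is continuous, $Kw_k\to Kw$, so $Kw\in T_D(Kx_0)$ and hence $w\in K^{-1}(T_D(Kx_0))$.

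For the reverse inclusion I would isolate two structural facts. First, $K^{-1}(D)$ is convex as the preimage of a convex set under a linear map, so by \eqref{eq:TgCon} its tangent cone $Q:=T_{K^{-1}(D)}(x_0)=\cl[\cone(K^{-1}(D)-x_0)]$ is a convex cone. Second, every $d\in\Ker K$ satisfies $K(x_0+d)=Kx_0\in D$, hence $\Ker K\subset K^{-1}(D)-x_0\subset\cone(K^{-1}(D)-x_0)$; because a convex cone containing the subspace $\Ker K$ is stable under adding elements of $\Ker K$ (write $q+d=2(\tfrac12 q+\tfrac12 d)$), both $\cone(K^{-1}(D)-x_0)$ and $Q$ are invariant under translations by $\Ker K$. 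Granting the key claim $K(Q)\supset T_D(Kx_0)$, the reverse inclusion follows: if $Kw\in T_D(Kx_0)$, choose $u\in Q$ with $Ku=Kw$; then $w-u\in\Ker K\subset Q$, and since $Q$ is a convex cone it is closed under addition, so $w=u+(w-u)\in Q$.

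It remains to prove $K(Q)\supset T_D(Kx_0)$, which is where the hypothesis $D\subset\Im K$ and the main difficulty enter. Fix $\eta\in T_D(Kx_0)=\cl[\cone(D-Kx_0)]$ and choose $\xi_k\to\eta$ with $\xi_k=\lambda_k(c_k-Kx_0)$, $\lambda_k\ge 0$, $c_k\in D\subset\Im K$. Writing $c_k=Kz_k$ with $z_k\in K^{-1}(D)$ gives $\xi_k=Ku_k$ for $u_k:=\lambda_k(z_k-x_0)\in\cone(K^{-1}(D)-x_0)$. The obstacle is that the preimages $u_k$ need not converge even though $\xi_k$ does, because $K$ may have a nontrivial kernel, so $u_k$ could drift off in kernel directions or blow up. I would control this by replacing $u_k$ with its orthogonal projection $Pu_k$ onto $(\Ker K)^\perp$: by the kernel-invariance of $\cone(K^{-1}(D)-x_0)$ noted above, $Pu_k$ still lies in this cone and satisfies $K(Pu_k)=Ku_k=\xi_k$; since $K$ restricts to a linear isomorphism of $(\Ker K)^\perp$ onto $\Im K$, there is $\beta>0$ with $\|Kv\|\ge\beta\|v\|$ on $(\Ker K)^\perp$, whence $\|Pu_k\|\le\beta^{-1}\|\xi_k\|$ is bounded. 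Passing to a convergent subsequence $Pu_k\to u\in Q$ and using continuity of $K$ yields $Ku=\lim\xi_k=\eta$, so $\eta\in K(Q)$ and the claim is established. The crux of the whole argument is precisely this coercivity-plus-projection step, which upgrades convergence of the images $Ku_k$ to convergence of admissible preimages in $Q$.
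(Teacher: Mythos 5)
Your proof is correct, and it follows the same overall skeleton as the paper's: the easy inclusion ``$\subset$'' is handled identically, and the reverse inclusion is in both cases reduced to showing $T_{C\cap{\rm Im}\, K}(Kx_0)\subset K\left(T_{K^{-1}(C)}(x_0)\right)$ and then absorbing kernel directions via the identity $\Ker K+T_{K^{-1}(C)}(x_0)=T_{K^{-1}(C)}(x_0)$. The difference lies in how the crucial closedness issue is resolved. The paper first lifts cone generators to obtain $T_{C\cap{\rm Im}\, K}(Kx_0)\subset \cl\left[K\left(T_{K^{-1}(C)}(x_0)\right)\right]$ and then removes the outer closure by citing \cite[Proposition~3.1]{BM09}: since $\Ker K+T_{K^{-1}(C)}(x_0)$ is closed, the image $K\left(T_{K^{-1}(C)}(x_0)\right)$ is closed. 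You instead prove the needed surjectivity onto the tangent cone directly: you lift the approximating sequence $\xi_k=\lambda_k(c_k-Kx_0)$ to preimages $u_k$ in $\cone(K^{-1}(D)-x_0)$, replace them by their orthogonal projections onto $(\Ker K)^\perp$ (legitimate because that convex cone contains $\Ker K$ and is therefore invariant under translations by $\Ker K$), and use the coercivity of $K$ on $(\Ker K)^\perp$ to extract a convergent subsequence whose limit lies in $Q$ and maps to $\eta$. This is, in effect, an inlined elementary proof of exactly the special case of the Borwein--Moors closedness criterion that the paper needs. What your route buys is self-containedness and transparency about where finite-dimensionality and the kernel-invariance are used; what the paper's route buys is brevity and the isolation of the closedness principle as a reusable external fact.
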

\begin{proof} Let us start with the proof of the implication ``$\subset$'' in \eqref{eq:Inv}. Indeed, for any $w\in  T_{K^{-1}(C)}(x_0)$, we find  sequences $t_k\dn 0$ and  $w_k\to w$ such that $x_0+t_kw_k\in K^{-1}(C)$. It follows that 
\[
Kx_0+t_kKw_k\in C\cap \Im K,
\]
which implies that $Kw \in T_{C\cap{\rm Im}\, K}(Kx_0)$. This clearly verifies the implication ``$\subset$'' in \eqref{eq:Inv}.

To prove the converse implication, note that 
\[
\R_+[{C\cap{\rm Im}\, K}-Kx_0]\subset K\left(\R_+(K^{-1}(C\cap{\rm Im}\, K)-x_0)\right).
\]
Indeed, for any $w$ belonging the left-hand side, there exists $t>0$ and $u\in K^{-1}(C\cap{\rm Im}\, K)$    such that $w=t(Ku-Kx_0)=K(t(u-x_0))$, which is an element of the right-hand side. Since $C$ is a closed convex set, so is  ${C\cap{\rm Im}\, K}$ and $K^{-1}(C\cap{\rm Im}\, K)$. We obtain from the above inclusion and \eqref{eq:TgCon} that
\begin{equation}\label{imp:tan}
    \begin{aligned}
T_{C\cap{\rm Im}\, K}(Kx_0)=\cl\left(\R_+[{C\cap{\rm Im}\, K}-Kx_0]\right)&\subset \cl\left[K\left(\R_+(K^{-1}(C\cap{\rm Im}\, K)-x_0)\right)\right]\\
&\subset \cl\left[K\left(\cl\left(\R_+(K^{-1}(C\cap{\rm Im}\, K)-x_0)\right)\right)\right]\\
&\subset \cl\left[K\left(T_{K^{-1}(C\cap{\rm Im}\, K)}(x_0)\right)\right]\\
&= \cl\left[K\left(T_{K^{-1}(C)}(x_0)\right)\right].
\end{aligned}
\end{equation}
% Let us claim next that $K\left(T_{K^{-1}(C)}(x_0)\right)$ is a closed set. Set $D:=T_{K^{-1}(C)}(x_0)$ and $M:= (D+ \Ker K)\cap \Im K^*$. First, we will prove that $K(D)=K(M)$. Indeed, we have
% \[
% K(M)\subset K(D+\Ker K)= K(D).
% \] 
% For any $x\in D$, using the direct sum decomposition $\XX =\Ker K \oplus \Im K^*$ writes $x=u+m$ with $u\in \Ker K$ and $m\in \Im K^*$. Then $m=x-u\in D+\Ker K $, hence $m\in M$. Therefore, $Kx = Km \in K(M) $, which shows $K(D) \subset K(M)$. Combining the two inclusions yields $K(D)=K(M)$. Additionally, we define the linear operator $\widetilde K:\Im K^{*}\to K(\Im K^{*})$ by
% \[
% \widetilde K(x):=K(x) \quad \mbox{for all}\quad x\in \Im K^{*}.
% \]
% By construction, $\widetilde K$ is surjective. Moreover, if $\widetilde K(x)=0$ for some $x\in \Im K^{*}$, then
% $x\in \Ker K\cap \Im K^{*}=\{0\}$, hence $x=0$. Thus $\widetilde K$ is injective,
% and therefore $\widetilde K$ is a homeomorphism between $\Im K^*$ and $K(\Im K^*)$. 
Note also that $x_0+\Ker K\subset K^{-1}(C)$, which implies that $\Ker K\subset T_{K^{-1}(C)}(x_0) $ and that 
\[
\Ker K+T_{K^{-1}(C)}(x_0)=T_{K^{-1}(C)}(x_0). 
\]
Hence $\Ker K+T_{K^{-1}(C)}(x_0)$ is closed. It follows from \cite[Proposition 3.1] {BM09} that $K\left(T_{K^{-1}(C)}(x_0)\right)$ is a closed set. Finally, combining this with the last inclusion in \eqref{imp:tan}  verifies the implication ``$\supset$'' in \eqref{eq:Inv} and completes the proof of the lemma. 
% Additionally, it is well-known \cite[Proposition 2.173]{BS00} that there exists some constant $s>0$ such that
% \[
% \|x\|\le s\|Kx\|\quad\mbox{for any} \quad x\in \Im K^*.
% \]
% Let $\{y_n\}\subset K(D)$ and $y_n\to y$. There exists $\{m_n\}\subset M$ such that $y_n=K(m_n)$. The above inequality tells us that $\{m_n\}$ is bounded, hence without loss of generality passing a subsequence suppose that $m_n\to m$. Moreover, since
% \[
% \Ker K + T_{K^{-1}(C)}(x_0)=T_{K^{-1}(C)}(x_0),
% \]
% it follows that $M$ is closed, hence $m\in M$, i.e, $y\in K(M)=K(D)$. This proves that $K\left(T_{K^{-1}(C)}(x_0)\right)$ is closed. Finally, combining this and the last inclusion in \eqref{imp:tan}  verifies the implication ``$\supset$'' in \eqref{eq:Inv} and completes the proof of the lemma.
  \end{proof}

The following theorem, which provides necessary and sufficient conditions for isolated calmness of solution mapping \eqref{eq:SLS}, is the main result in our paper.  

\begin{Theorem}[Necessary and sufficient conditions for isolated calmness]\label{thm:NSC}
Let $\ox\in S_{LS}(\bar b,\bar \mu)$  and $\oy\in \partial g(K\ox)$ with $K^*\oy=-\frac{1}{\bar \mu}\Phi^*(\Phi \bar x-\bar b)$.  If $S_{LS}$ has the isolated calmness property at $(\bar b,\bar \mu)$ for $\ox$, then we have 
\begin{equation}\label{eq:Nes}
    \Ker \Phi\cap K^{-1} \left(T_{\partial g^*(\oy)\cap {\rm Im}\,K}(K\ox)\right)=\{0\}.
\end{equation}
If, additionally, $g$ satisfies the quadratic growth condition at $K\ox$ for $\oy$, then $S_{LS}$ has the isolated calmness property at $(\bar b,\bar \mu)$ for $\ox$ provided that 
\begin{equation}\label{eq:Suf}
\Ker \Phi\cap K^{-1}\left(T_{\partial g^*(\oy)}(K\ox)\right)=\{0\}.
\end{equation}
Consequently, condition \eqref{eq:Suf} is necessary and sufficient for the isolated calmness of $S_{LS}$ at $(\bar b,\bar \mu)$ for $\ox$ if one of the following two qualification conditions is true:

\begin{itemize}
\item[\bf(i)] The set $\partial g^*(\oy)$ is polyhedral.

\item[\bf(ii)] $\Im K\cap\ri (\partial g^*(\oy))\neq \emptyset$, where $\ri (\partial g^*(\oy))$ is the relative interior of the set $\partial g^*(\oy)$ defined in \eqref{eq:ri}.

%\item[\bf(iii)] $N_{\partial g^*(\oy)}(K\ox)\cap \Ker K^*=\{0\}$, or equivalently $T_{\partial g^*(\oy)}(K\ox)+\Im K=\YY$. 
% \item[\bf(iv)] There exists $\ve>0$ such that $N_{\partial g^*(\oy)\cap {\rm Im}K}(K\ox)\cap \B\subset \ve\left (N_{\partial g^*(\oy)}(K\ox)\cap \B\right)+\ve \left(\Ker K^*\cap \B\right)$
\end{itemize}
\end{Theorem}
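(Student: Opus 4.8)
The plan is to treat the three assertions — necessity of \eqref{eq:Nes}, sufficiency of \eqref{eq:Suf}, and their equivalence under (i) or (ii) — one at a time, the whole argument resting on the observation that for the data term $f(x,b,\mu)=\frac{1}{2\mu}\|\Phi x-b\|^2$ one has $\nabla^2_{xx}f(\ox,\bar b,\bar\mu)=\frac{1}{\bar\mu}\Phi^*\Phi$, whose kernel is exactly $\Ker\Phi$ (indeed $\Phi^*\Phi x=0$ forces $\|\Phi x\|^2=0$). Granting this, the sufficiency is immediate from Theorem~\ref{thm:Suff} applied with parameter $p=(b,\mu)$: since $\bar\mu>0$ the map $f$ is twice continuously differentiable near $(\ox,\bar b,\bar\mu)$, and condition \eqref{eq:KerK} becomes verbatim \eqref{eq:Suf}, so the quadratic growth of $g$ at $K\ox$ for $\oy$ yields isolated calmness.

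For necessity I would argue by contraposition and manufacture a violating sequence. Suppose \eqref{eq:Nes} fails, and choose $0\neq w\in\Ker\Phi\cap K^{-1}(T_{\partial g^*(\oy)\cap\Im K}(K\ox))$. Applying Lemma~\ref{lem:tan} to the closed convex set $C=\partial g^*(\oy)$ (note $K\ox\in C\cap\Im K$, as $\oy\in\partial g(K\ox)$ is equivalent to $K\ox\in\partial g^*(\oy)$) rewrites this intersection as $\Ker\Phi\cap T_{K^{-1}(\partial g^*(\oy))}(\ox)$; hence there are $t_k\downarrow 0$ and $w_k\to w$ with $x_k:=\ox+t_kw_k$ satisfying $\oy\in\partial g(Kx_k)$. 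The decisive step is to perturb only $b$, taking $b_k:=\bar b+t_k\Phi w_k$ and $\mu_k:=\bar\mu$, which freezes the residual: $\Phi x_k-b_k=\Phi\ox-\bar b$, so $\frac{1}{\bar\mu}\Phi^*(\Phi x_k-b_k)=-K^*\oy\in -K^*\partial g(Kx_k)$ and therefore $x_k\in S_{LS}(b_k,\bar\mu)$. Since $w\in\Ker\Phi$ gives $\Phi w_k\to\Phi w=0$, we have $\|b_k-\bar b\|=t_k\|\Phi w_k\|$ while $\|x_k-\ox\|=t_k\|w_k\|$; for large $k$ the points $x_k$ and parameters $(b_k,\bar\mu)$ fall in any prescribed neighborhood, so isolated calmness would force $\|w_k\|\le\kappa\|\Phi w_k\|\to 0$, contradicting $w\neq 0$.

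For the equivalence under (i) or (ii) I would show that \eqref{eq:Nes} and \eqref{eq:Suf} are literally the same condition. For any set $A$ one has $K^{-1}(A)=K^{-1}(A\cap\Im K)$, so \eqref{eq:Suf} is unaffected by replacing $T_{\partial g^*(\oy)}(K\ox)$ with $T_{\partial g^*(\oy)}(K\ox)\cap\Im K$; it then remains to compare this with $T_{\partial g^*(\oy)\cap\Im K}(K\ox)$. The inclusion ``$\subseteq$'' is automatic, and for ``$\supseteq$'' I would invoke the classical intersection rule $T_{C\cap L}(c_0)=T_C(c_0)\cap T_L(c_0)$ for the subspace $L=\Im K$, which holds when $C=\partial g^*(\oy)$ is polyhedral (case (i)) and when $\ri C\cap L\neq\emptyset$ (case (ii), using $\ri L=L$). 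Under either qualification the two tangent cones coincide, so do \eqref{eq:Nes} and \eqref{eq:Suf}, and combining the necessity and sufficiency already obtained closes the equivalence.

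The main obstacle is the necessity construction: one must spot the residual-freezing perturbation $b_k=\bar b+t_k\Phi w_k$, which lets the single subgradient $\oy$ keep certifying optimality at the displaced point $x_k$, together with the constraint $w\in\Ker\Phi$ that renders the parameter perturbation of strictly smaller order than the solution displacement. The remaining work is the routine bookkeeping of admissibility — checking that $x_k$ genuinely solves $P(b_k,\bar\mu)$, that $\mu_k=\bar\mu>0$, and that $(x_k,b_k,\bar\mu)$ enters the localizing neighborhood as $k\to\infty$.
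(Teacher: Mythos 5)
Your proposal is correct and follows essentially the same route as the paper: the residual-freezing perturbation $b_k=\bar b+t_k\Phi w_k$ with $\mu_k=\bar\mu$ for necessity, Theorem~\ref{thm:Suff} with $\Ker\nabla^2_{xx}f=\Ker\Phi$ for sufficiency, and the tangent-cone intersection rule under (i) or (ii) for the equivalence. The only cosmetic difference is that you invoke Lemma~\ref{lem:tan} at the outset and use the conjugate inversion $\oy\in\partial g(Kx_k)\Leftrightarrow Kx_k\in\partial g^*(\oy)$ directly, whereas the paper works with $\partial h^*(\ov)$ and converts via Fenchel--Rockafellar duality at the end.
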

\begin{proof} Let us start by supposing $S_{LS}$ is isolated calm at $(\bar b,\bar \mu)$ for $\ox$. Hence, there exist $\kk>0$ and a neighborhood $U\times V\times M$ of $(\ox,\bar b, \bar \mu)$ in $\XX\times \YY\times\R_{++}$  such that 
\begin{equation}\label{eq:ICLS}
S_{LS}(b,\mu)\cap U\subset \ox+\kk[\|b-\bar b\|+|\mu-\bar\mu|]\B_{\XX} \quad \mbox{for all}\quad (b,\mu)\in V\times M.
\end{equation}
Recall $h(x)=g(Kx)$ for $x\in \XX$ and $\ov:=-\frac{1}{\bar \mu}\Phi^*(\Phi \ox-\bar b)$. We claim that   
\begin{equation}\label{eq:KerTg}
\Ker\Phi\cap T_{\partial h^*(\bar v)}(\ox)=\{0\}. 
\end{equation}
Indeed, pick any $w\in \Ker\Phi\cap T_{\partial h^*(\bar v)}(\ox)$. Hence, there exist $t_k\dn 0$ and $w_k\to w$ such that $\ox+t_kw_k\in \partial h^*(\bar v)$, i.e., $\ov\in \partial h(\ox+t_kw_k)$. Let us define 
\[
\mu_k:=\bar \mu\quad\mbox{and}\quad b_k=\bar b+ t_k\Phi w_k\to \bar b.
\]
Note that 
\[
-\frac{1}{\bar \mu}\Phi^*(\Phi (\ox +t_k w_k)-b_k)=-\frac{1}{\bar \mu}\Phi^*(\Phi \ox -\bar b)=\ov\in \partial h(\ox+t_kw_k).
\]
This tells us that $\ox+t_kw_k\in S_{LS}(b_k,\bar\mu)$. For sufficiently large $k$, it follows from \eqref{eq:ICLS} that 
\[
t_k\|w_k\|=\|\ox+t_kw_k-\ox\|\le \kk\|b_k-\bar b\|=\kk t_k\|\Phi w_k\|,
\]
which yields $\|w_k\|\le \kk\|\Phi w_k\|$. By taking $k\to \infty$, we have $\|w\|\le\kk \|\Phi w\|=0$, as $w\in \Ker \Phi$. This verifies the claim \eqref{eq:KerTg}. Note further that $\partial h^*(\ov)=K^{-1}(\partial g^*(\oy))$ due to the Fenchel-Rockafellar duality theorem \cite[Theorem~31.3]{R70}. 
By Lemma~\ref{lem:tan}, condition~\eqref{eq:KerTg} is exactly \eqref{eq:Nes}. Thus, \eqref{eq:Nes} is a necessary condition of the isolated calmness of $S_{LS}$ at $(\bar b,\bar \mu)$ for $\ox$. 

Define $f(x,p):=\frac{1}{2\mu}\|\Phi x-b\|^2$ with $p=(b,\mu)$ and note that 
\begin{equation}\label{eq:KHes}
\Ker \nabla^2_{xx}f(\ox,\op)=\Ker \frac{1}{\bar \mu} \Phi^*\Phi=\Ker \Phi. 
\end{equation}
Condition~\eqref{eq:Suf} is the same with \eqref{eq:KerK} in this case. Hence, it is sufficient for the isolated calmness of $S_{LS}$ at $\bar p=(\bar b,\bar\mu)$ for $\ox$ by Theorem~\ref{thm:Suff}. 

Finally, it remains to show that \eqref{eq:Nes} and \eqref{eq:Suf} are identical  when either  {\bf(i)} or {\bf(ii)} is satisfied. Indeed, since $\Im K$ is polyhedral, if either {\bf(i)} or {\bf(ii)} holds, we obtain from the formula of tangent cone to intersection sets from Convex Analysis that   
\begin{equation*}
T_{\partial g^*(\oy)\cap {\rm Im}\,K}(K\ox)=T_{\partial g^*(\oy)}(K\ox)\cap T_{{\rm Im}\, K}(K\ox)=T_{\partial g^*(\oy)}(K\ox)\cap\Im K.
\end{equation*}
This implies that 
\begin{equation*}
\begin{aligned}
K^{-1}\left(T_{\partial g^*(\oy)\cap {\rm Im}\,K}(K\ox)\right)=K^{-1}\left( T_{\partial g^*(\oy)}(K\ox)\cap \Im K\right)=K^{-1}\left(T_{\partial g^*(\oy)}(K\ox)\right), 
\end{aligned}
\end{equation*}
which justifies the equivalence between \eqref{eq:Nes} and \eqref{eq:Suf}. The proof is complete. 
\end{proof}

 Obviously,  condition {\bf (ii)} in Theorem~\ref{thm:NSC} is valid when $K$ is a surjective operator, i.e., $\Im K=\YY$ or $\Ker K^*=0$. A typical situation in least-squares regularized problems is when $K$ is the identity mapping and $\YY=\XX$. In this case, condition \eqref{eq:Suf} is known as a characterization for strong solution $\ox$ of problem $P(\bar b,\bar\mu)$ as in \cite[Theorem~3.3]{FNP23}.
\begin{Corollary}[Equivalence between isolated calmness and strong solution]\label{cor:Equ1} Suppose that $K:\XX\to \XX$ is the identity mapping. Let $\ox$ be an optimal solution of problem $P(\bar b,\bar \mu)$. If $g$ satisfies the quadratic growth condition at $\ox$ for $\oy=-\frac{1}{\bar \mu}\Phi^*(\Phi\ox-\bar b)$, the following are equivalent 
\begin{itemize}
    \item[{\bf(i)}] The solution mapping $S_{LS}$ has the isolated calmness property at $(\bar b,\bar \mu)$ for $\ox$.
    \item[{\bf(ii)}] $\ox$ is the strong solution of problem $P(\bar b,\bar \mu)$.
    \item[{\bf(iii)}] $\Ker \Phi\cap T_{\partial g^*(\oy)}(\ox)=\{0\}.$
\end{itemize}
\end{Corollary}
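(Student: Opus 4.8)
The plan is to establish the two equivalences (i)$\Leftrightarrow$(iii) and (ii)$\Leftrightarrow$(iii) separately: the first as a direct specialization of Theorem~\ref{thm:NSC}, and the second through the second-subderivative characterization of a strong (order-two) minimizer. Throughout I would write $\psi(x):=\frac{1}{2\bar\mu}\|\Phi x-\bar b\|^2+g(x)$ for the objective of $P(\bar b,\bar\mu)$ and record at the outset that, since $\ox$ minimizes $\psi$, the Fermat rule gives $0\in\frac{1}{\bar\mu}\Phi^*(\Phi\ox-\bar b)+\partial g(\ox)$, so that $\oy=-\frac{1}{\bar\mu}\Phi^*(\Phi\ox-\bar b)\in\partial g(\ox)$ and, equivalently, $\ox\in\partial g^*(\oy)$. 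In particular $\oy$ is a legitimate multiplier in the sense of Theorem~\ref{thm:NSC}, since with $K=\Id$ one has $K^*\oy=\oy=-\frac{1}{\bar\mu}\Phi^*(\Phi\ox-\bar b)$.

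For (i)$\Leftrightarrow$(iii) I would simply invoke Theorem~\ref{thm:NSC} with $K$ equal to the identity on $\XX=\YY$. Because $K=\Id$ is surjective, the qualification condition {\bf(ii)} of that theorem holds automatically, as noted in the remark following it; and with $K=\Id$ the condition \eqref{eq:Suf} reads precisely $\Ker\Phi\cap T_{\partial g^*(\oy)}(\ox)=\{0\}$, which is exactly (iii). Since Theorem~\ref{thm:NSC} asserts that, under qualification {\bf(ii)}, condition \eqref{eq:Suf} is both necessary and sufficient for the isolated calmness of $S_{LS}$ at $(\bar b,\bar\mu)$ for $\ox$, the equivalence (i)$\Leftrightarrow$(iii) is immediate.

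For (ii)$\Leftrightarrow$(iii) the starting point is to read the strong (order-two) solution property as the quadratic growth / strong minimum inequality \eqref{eq:SM} for the convex function $\psi$ at $\ox$ with subgradient $0\in\partial\psi(\ox)$, namely $\psi(x)\ge\psi(\ox)+\frac{c}{2}\|x-\ox\|^2$ near $\ox$. By the characterization \eqref{eq:SM}$\Leftrightarrow$\eqref{eq:SS} from \cite[Theorem~13.24]{RW98}, this is equivalent to $d^2\psi(\ox|\,0)(w)>0$ for all $w\ne 0$, i.e.\ $\Ker d^2\psi(\ox|\,0)=\{0\}$. Next I would apply the sum rule for the second subderivative already used in the proof of Theorem~\ref{thm:Suff} (the $C^2$ quadratic $f(x)=\frac{1}{2\bar\mu}\|\Phi x-\bar b\|^2$ plus the convex $g$) to obtain
\[
d^2\psi(\ox|\,0)(w)=\frac{1}{\bar\mu}\|\Phi w\|^2+d^2 g(\ox|\,\oy)(w),
\]
where both summands are nonnegative by convexity. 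Hence the sum vanishes exactly when both terms do, so that $\Ker d^2\psi(\ox|\,0)=\Ker\Phi\cap\Ker d^2 g(\ox|\,\oy)$, using $\frac{1}{\bar\mu}\|\Phi w\|^2=0\Leftrightarrow w\in\Ker\Phi$ as in \eqref{eq:KHes}. Invoking the quadratic growth hypothesis on $g$ together with \eqref{eq:KerSS} to replace $\Ker d^2 g(\ox|\,\oy)$ by $T_{\partial g^*(\oy)}(\ox)$ yields $\Ker d^2\psi(\ox|\,0)=\Ker\Phi\cap T_{\partial g^*(\oy)}(\ox)$, whence (ii)$\Leftrightarrow$(iii).

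The main subtlety I anticipate is essentially bookkeeping: pinning down the precise meaning of a strong solution of order two as inequality \eqref{eq:SM} with $\ov=0$, and justifying the additive decomposition of $d^2\psi$. The latter is not a general fact for second subderivatives but holds here because $f$ is twice continuously differentiable; this is exactly the computation carried out in the proof of Theorem~\ref{thm:Suff}, so I would reuse it rather than reprove it. Everything else is a direct application of results already established, and the two equivalences chain together to give the full statement.
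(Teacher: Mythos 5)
Your proposal is correct. For the equivalence (i)$\Leftrightarrow$(iii) you do exactly what the paper does: specialize Theorem~\ref{thm:NSC} to $K=\Id$, observing that surjectivity of $K$ makes qualification {\bf(ii)} of that theorem automatic, so that \eqref{eq:Suf} becomes $\Ker\Phi\cap T_{\partial g^*(\oy)}(\ox)=\{0\}$ and is both necessary and sufficient for isolated calmness. Where you diverge is on (ii)$\Leftrightarrow$(iii): the paper simply cites \cite[Theorem~3.3]{FNP23} for this equivalence, whereas you rederive it from scratch by reading the strong solution property as \eqref{eq:SM} for $\psi=\frac{1}{2\bar\mu}\|\Phi\cdot-\bar b\|^2+g$ with $\ov=0$, passing to $\Ker d^2\psi(\ox|\,0)=\{0\}$ via \eqref{eq:SS}, decomposing $d^2\psi(\ox|\,0)(w)=\frac{1}{\bar\mu}\|\Phi w\|^2+d^2g(\ox|\,\oy)(w)$ by the same sum rule the paper uses in Theorem~\ref{thm:Suff}, and then converting $\Ker d^2g(\ox|\,\oy)$ into $T_{\partial g^*(\oy)}(\ox)$ via \eqref{eq:KerSS} under the quadratic growth hypothesis. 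Each step is justified by results already established in the paper (the nonnegativity of both summands by convexity, the exactness of the sum rule because the data-fitting term is $C^2$, and Theorem~\ref{thm:Ker}), so your argument is a valid, self-contained replacement for the external citation; what it costs is a little extra length, and what it buys is independence from \cite{FNP23} and an explicit display of why the quadratic growth assumption on $g$ is exactly what is needed to pass from the second-subderivative kernel to the tangent cone in condition (iii).
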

 \begin{proof} The equivalence between {\bf (i)} and {\bf(ii)} follows directly from Theorem~\ref{thm:NSC}. Moreover, the equivalence between {\bf (ii)} and {\bf(iii)} is obtained recently in \cite[Theorem~3.3]{FNP23} with the same assumption that $g$ satisfies the quadratic growth condition at $\ox$ for $\oy$.
 \end{proof}

  Another example of surjective linear operator is the one-dimensional {\em discrete gradient operator} $D:\R^n\to\R^{n-1}$: 
  \[
  Dx:=(x_1-x_2, x_2-x_3, \ldots, x_{n-1}-x_n)^T\quad \mbox{ for }\quad x\in \R^n,
  \]
  that is broadly used in signal processing.  However,  the well-known two-dimensional (2D) {\em discrete gradient operator} $\nabla:\R^{n_1\times n_2}\to \R^{2n_1n_2}$ defined below is definitely not surjective
\begin{equation*}\label{eq:TV}
    (\nabla x)_{i,j}:=\begin{pmatrix}
        (\nabla x)^1_{i,j}\\(\nabla x)^2_{i,j}
    \end{pmatrix}\quad \mbox{for}\quad x\in \R^{n_1\times n_2}\quad \mbox{with}
\end{equation*}
\begin{equation*}\label{eq:total}
   (\nabla x)^1_{i,j}:=\left\{\begin{array}{ll}x_{i+1,j}-x_{i,j}\quad &\mbox{for}\quad i < n_1\\ 
   0 \quad &\mbox{for}\quad i = n_1,
\end{array}\right.\; \mbox{and}\;\;\;\\
(\nabla x)^2_{i,j}:=\left\{\begin{array}{ll}x_{i,j+1}-x_{i,j}\quad &\mbox{for}\quad j < n_2\\ 
0 \quad &\mbox{for}\quad j= n_2.
\end{array}\right.
\end{equation*}
This linear operator is widely used in image processing in the so-called {\em isotropic total variation seminorm} regularizer 
\[
g(\nabla x)=\sum_{1\le i< n_1, 1\le j< n_2}\sqrt{(x_{i+1,j}-x_{i,j})^2+(x_{i,j+1}-x_{i,j})^2},
\]
where the outer function $g$ is the group Lasso regularizer \eqref{eq:GLasso} with $(n_1-1)(n_2-1)$ groups of two elements. Actually, any group Lasso regularizer satisfies the quadratic growth condition and $\partial g^*(\oy)$ is a polyhedral set for any $\oy\in \YY$, as observed from Remark~3.3(b). This allows us to obtain a similar result with Corollary~\ref{cor:Equ1} for the case of analysis group Lasso below.

\begin{Corollary}[Equivalence between isolated calmness and solution uniqueness for analysis group Lasso problems]\label{cor:Equ2} Suppose that $K:\XX\to \R^m$ is a linear operator and $g:\R^m\to\R$ is the group Lasso regularizer defined \eqref{eq:GLasso}. Let $\ox$ be an optimal solution of problem $P(\bar b,\bar \mu)$ and $\oy\in \partial g(K\ox)$ with $K^*\oy=-\frac{1}{\bar \mu}\Phi^*(\Phi \bar x-\bar b)$. The following are equivalent:  
\begin{itemize}
    \item[{\bf(i)}] The solution mapping $S_{LS}$ has the isolated calmness property at $(\bar b,\bar \mu)$ for $\ox$.
    \item[{\bf(ii)}] $\ox$ is the (strong) unique solution of problem $P(\bar b,\bar \mu)$.
    \item[{\bf(iii)}] $\Ker \Phi\cap K^{-1}\left(T_{\partial g^*(\oy)}(K\ox)\right)=\{0\}.$
\end{itemize}
\end{Corollary}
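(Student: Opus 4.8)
The plan is to derive all three equivalences from \textbf{Theorem~\ref{thm:NSC}}, supplemented by a convexity argument for uniqueness and an explicit construction of alternative minimizers, using the two special features of the group Lasso regularizer.

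First I would record the two structural facts that place the group Lasso inside the hypotheses of Theorem~\ref{thm:NSC}. By Remark~\ref{rem:QL}(b), $g=\|\cdot\|_{1,2}$ satisfies the quadratic growth condition at $K\ox$ for $\oy$, so the sufficiency half of Theorem~\ref{thm:NSC} applies. Moreover, since $\partial g^*=(\partial g)^{-1}$ and $\partial g$ is given by \eqref{sub:l1l2}, a direct computation yields
\[
\partial g^*(\oy)=\prod_{J:\,\|\oy_J\|=1}\R_+\oy_J\;\times\;\prod_{J:\,\|\oy_J\|<1}\{0\},
\]
a product of rays and singletons, hence a polyhedral set. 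Thus qualification condition \textbf{(i)} of Theorem~\ref{thm:NSC} holds, and that theorem delivers at once the equivalence \textbf{(i)}$\Leftrightarrow$\textbf{(iii)}, namely that isolated calmness of $S_{LS}$ at $(\bar b,\bar\mu)$ for $\ox$ is equivalent to $\Ker\Phi\cap K^{-1}(T_{\partial g^*(\oy)}(K\ox))=\{0\}$.

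Next I would establish \textbf{(i)}$\Rightarrow$\textbf{(ii)}. Because $g$ is finite on all of $\R^m$, the constraint qualification \eqref{eq:RCQ} holds trivially and the chain rule \eqref{eq:CR} is global, so $S_{LS}(\bar b,\bar\mu)$ coincides with $\mathrm{argmin}\,F$ for the convex objective $F(x)=\frac{1}{2\bar\mu}\|\Phi x-\bar b\|^2+g(Kx)$; in particular it is a convex set. Unravelling the isolated calmness definition \eqref{eq:IC} at the base point forces $S_{LS}(\bar b,\bar\mu)\cap U=\{\ox\}$, so $\ox$ is an isolated point of a convex set and therefore its only point, i.e.\ $\ox$ is the unique solution. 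The ``strong'' qualifier is justified separately: since $f(x)=\frac{1}{2\bar\mu}\|\Phi x-\bar b\|^2$ is quadratic in $x$, the linearized function $\Psi$ appearing in the proof of Theorem~\ref{thm:Suff} equals $F$ exactly, and that proof shows that \eqref{eq:Suf} forces $d^2F(\ox|\,0)(w)>0$ for all $w\neq0$, which is the order-two strong minimum property \eqref{eq:SM}.

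Finally, for \textbf{(ii)}$\Rightarrow$\textbf{(iii)} I would argue by contraposition. If \textbf{(iii)} fails, pick $w\neq0$ with $\Phi w=0$ and $Kw\in T_{\partial g^*(\oy)}(K\ox)$. Because $\partial g^*(\oy)$ is polyhedral, this tangency means $K\ox+tKw\in\partial g^*(\oy)$ for all small $t\ge0$, i.e.\ $\oy\in\partial g(K(\ox+tw))$, whence $K^*\oy\in K^*\partial g(K(\ox+tw))=\partial h(\ox+tw)$ with $h=g\circ K$. Using $\Phi w=0$ together with $K^*\oy=-\frac{1}{\bar\mu}\Phi^*(\Phi\ox-\bar b)$ we get
\[
\frac{1}{\bar\mu}\Phi^*\bigl(\Phi(\ox+tw)-\bar b\bigr)=\frac{1}{\bar\mu}\Phi^*(\Phi\ox-\bar b)=-K^*\oy,
\]
so $0\in\frac{1}{\bar\mu}\Phi^*(\Phi(\ox+tw)-\bar b)+\partial h(\ox+tw)$ and hence $\ox+tw\in S_{LS}(\bar b,\bar\mu)$ for all small $t>0$, producing solutions distinct from $\ox$ and contradicting uniqueness. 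This closes the cycle \textbf{(ii)}$\Rightarrow$\textbf{(iii)}$\Leftrightarrow$\textbf{(i)}$\Rightarrow$\textbf{(ii)}. I expect this last implication to be the main obstacle: the crux is to convert a mere tangent direction into an honest segment of optimal solutions, which is exactly where polyhedrality of $\partial g^*(\oy)$ is indispensable, and to check that the stationarity system is preserved along that segment; the explicit formula for $\partial g^*(\oy)$ and the global validity of the chain rule are the enabling ingredients.
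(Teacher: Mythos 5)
Your proposal is correct and follows essentially the same route as the paper: the equivalence \textbf{(i)}$\Leftrightarrow$\textbf{(iii)} via Theorem~\ref{thm:NSC} with the polyhedrality of $\partial g^*(\oy)$, the implication \textbf{(i)}$\Rightarrow$\textbf{(ii)} via convexity of the solution set, and \textbf{(ii)}$\Rightarrow$\textbf{(iii)} by converting a tangent direction into an actual optimal point $\ox+tw$ using $T_{\partial g^*(\oy)}(K\ox)=\cone(\partial g^*(\oy)-K\ox)$. The only (harmless) differences are that you argue the last step by contraposition rather than directly and add an explicit formula for $\partial g^*(\oy)$ and a justification of the ``strong'' qualifier that the paper leaves implicit.
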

\begin{proof}
Let us start to prove that [{\bf(i)}$\Rightarrow${\bf(ii)}]. As $S_{LS}$ is isolated calm at $(\bar b, \bar \mu)$ for $\ox$, there exists a neighborhood $U$ of $\ox$ in $\XX$ such that $S_{LS}(\bar b,\bar\mu)\cap U=\{\ox\}$. As the least-squares regularized problem is a convex optimization problem, $\ox$ is the unique solution of $P(\bar b,\bar \mu)$. Since $\partial g^*(\oy)$ is polyhedral, the equivalence between {\bf (i)} and {\bf (iii)} follows from Theorem~\ref{thm:NSC}. 

It remains to justify [{\bf(ii)}$\Rightarrow${\bf(iii)}] by supposing that $\ox$ is the unique solution of $P(\bar b, \bar\mu)$. Pick any $w\in \Ker \Phi \cap K^{-1}\left(T_{\partial g^*(\oy)}(K\ox)\right)$, which implies $Kw\in T_{\partial g^*(\oy)}(K\ox) $. As $\partial g^*(\oy)$ is a polyhedral set, 
\[
T_{\partial g^*(\oy)}(K\ox)=\cone \left(\partial g^*(\oy) - K\ox \right).
\]
Hence, we find some $t>0$ such that $K(\ox +tw)\in \partial g^*(\oy).$ It follows that 
\[
\ox+tw\in K^{-1}(\partial g^* (\oy))=\partial h^*(\ov)\quad \mbox{with}\quad \ov=-\frac{1}{\bar\mu}\Phi^*(\Phi \ox-\bar b).
\]
Thus, $-\frac{1}{\bar\mu}\Phi^*(\Phi \ox-b)\in \partial h(\ox+tw)=K^*\partial g(K(\ox+tw))$. It results that   $\ox+tw$ is also optimal solution of $P(\bar b,\bar \mu)$, which deduces $w=0$ as $\ox$ is unique solution of $P(\bar b,\bar \mu)$. This verifies condition {\bf(iii)}. The proof is complete.
\end{proof}

The condition {\bf (i)} in Theorem~\ref{thm:NSC} is also valid for the class of  convex piecewise linear-quadratic  functions $g$, which also satisfy the primal-dual quadratic growth condition as  discussed in Remark~\ref{rem:QL}.  Indeed, \cite[Theorem~11.14 and Proposition~12.30]{RW98} tells us that $\partial g^*(\oy)$ is polyhedral for any $\oy\in \YY$ in this case. Note that $h(x)=g(Kx)$ is also a convex piecewise linear-quadratic  function. This observation together with Theorem~\ref{thm:NSC} gives us another result that is similar with Corollary~\ref{cor:Equ1} for this class of functions.

\begin{Corollary}[Equivalence between isolated calmness and solution uniqueness] Suppose that $g:\YY\to \R\cup\{\infty\}$ is a l.s.c. convex piecewise linear-quadratic function with $\dom g\cap\Im K\neq \emptyset$. The following are equivalent 
\begin{itemize}
    \item[{\bf (i)}] The solution mapping $S_{LS}$ has isolated calmness property at $(\bar b,\bar \mu)$ for $\ox$.
    \item[{\bf (ii)}] $\ox$ is  the (strong) unique solution of $S_{LS}(\bar b,\bar\mu)$.  
\end{itemize}
    
\end{Corollary}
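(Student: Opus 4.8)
The plan is to reduce the statement entirely to Theorem~\ref{thm:NSC} and then to reuse the polyhedral uniqueness argument already carried out in Corollary~\ref{cor:Equ2}. First I would record the two structural facts that a convex piecewise linear-quadratic $g$ supplies at the point in question. By Remark~\ref{rem:QL}(a), $g$ satisfies the quadratic growth condition at $K\ox$ for any subgradient, and by \cite[Theorem~11.14 and Proposition~12.30]{RW98} the set $\partial g^*(\oy)$ is polyhedral; hence qualification condition \textbf{(i)} of Theorem~\ref{thm:NSC} is in force. That theorem then tells us that the isolated calmness of $S_{LS}$ at $(\bar b,\bar\mu)$ for $\ox$ is equivalent to
\[
\Ker\Phi\cap K^{-1}\big(T_{\partial g^*(\oy)}(K\ox)\big)=\{0\},
\]
namely to condition \eqref{eq:Suf}. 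Before invoking this I must secure a multiplier $\oy\in\partial g(K\ox)$ with $K^*\oy=\ov:=-\tfrac{1}{\bar\mu}\Phi^*(\Phi\ox-\bar b)$; this is precisely the chain rule $\partial h(\ox)=K^*\partial g(K\ox)$ with $h=g\circ K$ applied to the optimality condition $0\in\tfrac{1}{\bar\mu}\Phi^*(\Phi\ox-\bar b)+\partial h(\ox)$, together with the companion identity $\partial h^*(\ov)=K^{-1}(\partial g^*(\oy))$ coming from Fenchel--Rockafellar duality.

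For the implication \textbf{(i)}$\Rightarrow$\textbf{(ii)}, I would argue as in Corollary~\ref{cor:Equ2}: isolated calmness at $(\bar b,\bar\mu)$ for $\ox$ forces, upon specializing the estimate \eqref{eq:Ca} to $(b,\mu)=(\bar b,\bar\mu)$, a neighborhood $U$ with $S_{LS}(\bar b,\bar\mu)\cap U=\{\ox\}$, so $\ox$ is an isolated point of the solution set. Since $P(\bar b,\bar\mu)$ is a convex problem its solution set is convex, and a convex set possessing an isolated point is a singleton; hence $\ox$ is the unique solution. It is moreover a strong solution, as isolated calmness of the least-squares map yields the quadratic-growth (strong-minimum) property of the objective, consistent with Corollary~\ref{cor:Equ1}.

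For \textbf{(ii)}$\Rightarrow$\textbf{(i)}, assuming uniqueness I would verify \eqref{eq:Suf} by the same mechanism as in Corollary~\ref{cor:Equ2}. Take $w\in\Ker\Phi\cap K^{-1}(T_{\partial g^*(\oy)}(K\ox))$, so $Kw\in T_{\partial g^*(\oy)}(K\ox)$. Polyhedrality of $\partial g^*(\oy)$ removes the closure in \eqref{eq:TgCon}, giving $T_{\partial g^*(\oy)}(K\ox)=\cone(\partial g^*(\oy)-K\ox)$; thus some $t>0$ satisfies $K(\ox+tw)\in\partial g^*(\oy)$, i.e.\ $\ox+tw\in K^{-1}(\partial g^*(\oy))=\partial h^*(\ov)$, whence $\ov\in\partial h(\ox+tw)$. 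Since $\Phi w=0$ we have $\ov=-\tfrac{1}{\bar\mu}\Phi^*(\Phi(\ox+tw)-\bar b)$, so $\ox+tw$ also solves $P(\bar b,\bar\mu)$; uniqueness then forces $w=0$, establishing \eqref{eq:Suf} and hence isolated calmness through Theorem~\ref{thm:NSC}.

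The hard part will be the qualification bookkeeping rather than the main line of argument. RCQ \eqref{eq:RCQ} is a standing hypothesis of the section, but the corollary weakens it to $\dom g\cap\Im K\neq\emptyset$, so I must justify both the chain rule $\partial h=K^*\partial g\circ K$ and the duality identity $\partial h^*(\ov)=K^{-1}(\partial g^*(\oy))$ under this milder assumption. This is legitimate precisely because $\dom g$ is polyhedral: the polyhedral subdifferential calculus requires only nonempty intersection of the relevant domains, not a relative-interior or full RCQ condition. Once this calculus is in place (and noting that $h=g\circ K$ is itself convex piecewise linear-quadratic, so the same structural facts apply on the primal side), the remainder of the proof is a transcription of the arguments already established for Corollary~\ref{cor:Equ2}.
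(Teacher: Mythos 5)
Your proposal is correct and follows essentially the same route as the paper: the paper derives this corollary directly from Theorem~\ref{thm:NSC} by noting (via Remark~\ref{rem:QL}(a) and \cite[Theorem~11.14 and Proposition~12.30]{RW98}) that piecewise linear-quadratic $g$ satisfies the quadratic growth condition and has polyhedral $\partial g^*(\oy)$, and then reuses the uniqueness argument of Corollary~\ref{cor:Equ2} verbatim. Your extra care with the chain rule under the weakened hypothesis $\dom g\cap\Im K\neq\emptyset$ (justified by polyhedral subdifferential calculus) is a point the paper passes over silently, but it does not change the argument.
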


%Another important case that $\partial g^*(\oy)$ is polyhedral is when $g$ is the $\ell_1/\ell_2$ norm a.k.a. group Lasso regularizer. Suppose that $\mathcal{J}$ is a partition of $\{1,2, \ldots, n\}$ with $m$ distinct groups. The $\ell_1/\ell_2$ norm for  this partition is defined by 
%\begin{equation}\label{l12}
   % g(x)=\|x\|_{1,2} \eqdef\sum_{J\in \mathcal{J}}\|x_J\|\quad \mbox{for}\quad x\in \R^n,
%\end{equation}
%where $\|x_J\|$ is the regular Euclidean norm at $x_J\in \R^{|J|}$ for any $J\in \mathcal{J}$. For any $J\in \mathcal{J}$,  note that 
%\begin{eqnarray}\label{eq:Sub12}
%    \partial \|x_J\|=\left\{\begin{array}{ll} \B_J\; &\mbox{if}\; x_J=0_J\in \R^{|J|}\\
 %   \frac{x_J}{\|x_J\|} &\mbox{otherwise},
  %  \end{array}\right.
%\end{eqnarray}
%where $\B_J$ is the unit Euclidean ball in $\R^{|J|}$. Let us write $\oy=(\oy_J)_{J\in \mathcal{J}}$ with $\oy_J\in \partial \|\ox_J\|$. It follows from \eqref{eq:Sub12} that 
%\begin{eqnarray}\label{ISub}
%\partial g^*(\oy)=\disp\prod_{J\in \mathcal{J}}(\partial \|\cdot\|)^{-1}(\oy_J)=\prod_{J\in \mathcal{J}}\left\{\begin{array}{ll} \R_+\oy_J&\quad  \mbox{if}\quad \|\oy_J\|=1\\
%\{0\} &\quad  \mbox{if} \quad \|\oy_J\|<1,\end{array}\right.
%\end{eqnarray}
%which is clearly a polyhedra. In imaging, $K$ is usually chosen as the (discrete) gradient operator. In this case condition \eqref{eq:Suf} means that $\ox$ is the strong solution of problem~\eqref{eq:LeS}!!!!   

\begin{Remark}[Analysis nuclear norm minimization problems]{\rm
Another important regularizer broadly used in optimization and compressed sensing is the nuclear norm $g(Y)=\|Y\|_*$, the sum of all singular values $\sigma_1(Y), \ldots, \sigma_m(Y)$ of $Y\in \YY=\R^{m\times n}$ ($m\le n$). Let $\ox$ is an optimal solution of $P(\bar b, \bar \mu)$, i.e., $\ov:=-\frac{1}{\bar\mu}\Phi^*(\Phi \ox-\bar b)\in K^*\partial g(K\ox)$ and $\OY\in \partial\|K\ox\|_*$ satisfy condition $-\frac{1}{\bar\mu}\Phi^*(\Phi \ox-\bar b)=K^*\OY$. It is well-known that $\OX:=K\ox$ and  $\OY$ have {\em simultaneous ordered singular value decomposition} \cite{LS05a} with orthogonal matrices $\OU\in \R^{m\times m}$ and $\OV\in\R^{n\times n}$ in the sense that 
\begin{equation*}
    \OX=\OU({\rm Diag}\,\sigma(\OX))\OV^T\qquad \mbox{and}\qquad \OY=\OU({\rm Diag}\,\sigma(\OY))\OV^T,
\end{equation*}
where $\sigma(\OX):=\left (\sigma_1(\OX), \ldots,\sigma_{m}(\OX)\right)^T\in \R^m_+$ contains all singular values of $\OX$ and 
\[
{\rm Diag}\,\sigma(\OX):=\begin{pmatrix}\sigma_1(\OX)&\ldots&0&0&\ldots& 0\\
0&\ddots&0&0&\ldots& 0\\ 
0&\ldots &\sigma_{m}(\OX)&0&\ldots 
&0\end{pmatrix}_{m\times n}.
\]
Moreover, the nuclear norm satisfies the primal-dual quadratic growth condition as discussed in Remark~\ref{rem:QL}(c).

Let  $r=\rank(\OX)$ be the rank of $\OX$, which is the number of positive singular values of $\OX$. The subdifferential of $g$ at $\OX$ is computed by
\[
\partial g(\OX)=\partial\|\OX\|_*=\left\{\OU\begin{pmatrix}\Id_{r\times r}&0\\0&W\end{pmatrix}_{m\times n}\OV^T|\; \|W\|\le 1\right\},
\]
where $\Id_{r\times r}$ is the $r\times r$ identity matrix and $\|W\|$ stands for the {\em spectral norm} of $W\in \R^{(m-r)\times(n-r)}$. Let $p$ be the number of singular values of $\OY$ that are equal to $1$. It follows that $r\le p\le \rank(\OY)$. \cite[Proposition~10]{ZS17} tells us that   
\[
\partial g^*(\OY)=\OU\begin{pmatrix}\mathbb{S}^p_+&0\\0&0\end{pmatrix}_{m\times n}\OV^T,
\]
where $\mathbb{S}^p_+$ is the set of all {\em positive semidefinite} $p\times p$ matrices. Obviously, $\partial g^*(\OY)$  is not a polyhedral, i.e., condition {\bf (i)} in Theorem~\ref{thm:NSC} fails. However, it is possible that condition {\bf (ii)} in  Theorem~\ref{thm:NSC} is satisfied. Indeed, note that 
\[
\ri \partial g^*(\OY)=\OU\begin{pmatrix}\mathbb{S}^p_{++}&0\\0&0\end{pmatrix}_{m\times n}\OV^T,
\]
where $\mathbb{S}^p_{++}$ is the set of all positive definite $p\times p$ matrices. In this case, condition {\bf(ii)} in  Theorem~\ref{thm:NSC} turns into
\begin{equation*}
    \Im K\cap \OU\begin{pmatrix}\mathbb{S}^p_{++}&0\\0&0\end{pmatrix}_{m\times n}\OV^T\neq \emptyset. 
\end{equation*}
When the so-called {\em nondegeneracy condition} holds, i.e., $p=r$, the above condition is automatic, as $\OX=K\ox$ trivially lies in the left intersection.   \hfill$\triangle$

%Moreover, it follows from \cite[Corollary~4.2]{FNP23} that 
%\begin{equation}\label{Tang2}
%T_{\partial g^*(\Bar Y)}(\Bar X)=\left\{\OU\begin{pmatrix}A& B &0\\ B^T&C& 0\\0&0&0\end{pmatrix}\OV^T|\; A\in \mathbb{S}^{r}, B\in \R^{r\times (p-r)},  C\in \mathbb{S}^{p-r}_+\right\}.
%\end{equation}
%Condition~{\bf(iii)} in Theorem~\ref{thm:NSC} means that $T_{\partial g^*(\Bar Y)}(\Bar X)+\Im K=\R^{m\times n}$. When the nondegeneracy condition is satisfied, i.e.,  $p=r$, the latter condition turns into
%\[
%\OU\begin{pmatrix}\mathbb{S}^r& 0\\ 0&0\end{pmatrix}\OV^T+\Im K=\R^{m\times n},
%\]
%which is not trivial at all. On the other hand, if $\Im K=\left\{\OU\begin{pmatrix}0& A \\ B&C\end{pmatrix}\OV^T|\; A\in \R^{r\times (m-r)}, B\in \R^{r\times (m  -r)}  C\in \mathbb{S}^{p-r}_+\right\}.$
}    
\end{Remark}

Next let us consider the following parametric optimization problem
\begin{equation}\label{p:PB}
    \min_{x\in \XX}\quad f(x,p)+g(Kx+v),
\end{equation}
which is problem~\eqref{p:PO} with an extra {\em canonical perturbation} $v\in \YY$ that allows some possible noise/ error in the computation of $Kx$. Define the optimal solution mapping of this problem with parameter $(p,v)$ by
\begin{equation}
    \Bar S (p,v):={\rm argmin}\, \{f(x,p)+g(Kx+v)|\, x\in \XX\}.
\end{equation}
It is obvious that $\Bar S(p,0)=S(p)$. 
With $(p,v)$ around $(\op,0)$ and any $x\in \Bar S(p,v)$ around $\ox$, the similar chain rule with \eqref{eq:CR} under the RCQ \eqref{eq:RCQ} provides us a multiplier $y\in \partial g(Kx+v)$ such that $0\in \nabla_x f(x,p)+K^*y$.
As $Kx+v\in \partial g^*(y)$, the latter is equivalent to the following system
\begin{equation}\label{eq:GE}
\begin{pmatrix}0\\v\end{pmatrix}\in \begin{pmatrix}\nabla_x f(x,p)+K^*y\\-Kx\end{pmatrix}+\begin{pmatrix}0\\\partial g^*(y)\end{pmatrix},
\end{equation}
when $(p,v,x)$ is around $(\op,0,\ox)$. This is usually referred as the primal-dual system of problem~\eqref{p:PB}. Let us define the solution mapping of this system as 
\begin{equation}\label{eq:TS}
\Tilde S(p,v):=\{(x,y)\in \XX\times \YY \mbox{ satisfying \eqref{eq:GE}}\}.
\end{equation}
For the rest of this section, we pay our attention to studying the isolated calmness of this primal-dual solution mapping. Note that the linearized system for \eqref{eq:GE} is 
\begin{equation}\label{eq:LGE}
\begin{pmatrix} u\\v
\end{pmatrix}\in \begin{pmatrix}\nabla_x f(\ox,\op)+\nabla^2_{xx} f(\ox,\op)(x-\ox)+K^*y\\-Kx\end{pmatrix}+\begin{pmatrix}0\\\partial g^*(y)\end{pmatrix}\quad \mbox{with}\quad (u,v)\in \XX\times \YY.
\end{equation}
Define 
\begin{equation}\label{eq:HS}
    \Hat S(u,v):=\{(x,y)\in \XX\times \YY \mbox{ satisfying the above system}\}
\end{equation} 
and 
\[
F(x,y):= (\nabla_xf(\ox,\op)+\nabla^2_{xx} f(\ox,\op)(x-\ox)+K^*y, -Kx+\partial g^*(y))\quad \mbox{for}\quad (x,y)\in \XX\times \YY.
\]
Note that $\Hat S^{-1}(x,y)=F(x,y)$. To study the isolated calmness of $\Hat S$ at $(\ou,\ov)=(0,0)$ for some $(\ox,\oy)\in \Hat S(\ou,\ov)$, we need the Levy-Rockafellar criterion \eqref{eq:LR0} again
\begin{equation}\label{eq:LR}
0\in DF ((\ox,\oy)|\,(\ou,\ov))(x,y)\Longrightarrow(x,y)=0.
\end{equation}
Note that 
\[
DF ((\ox,\oy)|\,(\ou,\ov))(x,y)=\left(\nabla^2_{xx} f(\ox,\op)x+K^*y,-Kx+D\partial g^*(\oy|\; K\ox)(y)\right).
\]
Hence $0\in DF((\ox,\oy)|\,(\ou,\ov))(x,y)$ if and only if 
\begin{equation}\label{eq:LR2}
\nabla^2_{xx} f(\ox,\op)x+K^*y=0\quad \mbox{and}\quad Kx\in D\partial g^*(\oy|\; K\ox)(y).
\end{equation}
It follows that 
\begin{equation}\label{eq:Szero}
0=\la \nabla^2_{xx} f(\ox,\op)x+K^*y,x\ra=\la \nabla^2_{xx} f(\ox,\op)x,x\ra+\la y,Kx\ra. 
\end{equation}
Since $f$ is a convex function on $x$, $\nabla^2_{xx} f(\ox,\op)$ is positive semidefinite. Moreover, since $Kx\in D\partial g^*(\oy|\; K\ox)(y)$ and $g^*$ is a convex function, we have $\la Kx,y\ra\ge0$ by \eqref{eq:SPD0}. Equation \eqref{eq:Szero} means 
\[
\la \nabla^2_{xx} f(\ox,\op)x,x\ra=0\quad \mbox{and}\quad \la y,Kx\ra=0,
\]
which implies that $x\in \Ker \nabla^2_{xx} f(\ox,\op)$. This together with the first equation in \eqref{eq:LR2} implies that $y\in \Ker K^*$.  As $Kx\in D\partial g^*(\oy|\; K\ox)(y)$ and $\la y,Kx\ra=0$, we are in the position of using the zero-product property of subgradient graphical derivative from Theorem~\ref{thm:Zero}. These analysis allow us to establish a characterization for the isolated calmness property for the solution mapping $\Hat S$ in \eqref{eq:HS} as follows.

%, we obtain that  $v\in T_{\partial g(K\ox)}(\oy)$ and $Ku\in T_{\partial g^*(\oy)}(\ox)$. That's why me may need 
%\[
%\Ker K^*\cap T_{\partial g(K\ox)}(\oy)=\{0\}\quad \mbox{and}\quad \Ker \nabla^2_{xx} f(\ox,\op)\cap K^{-1}\left(T_{\partial g^*(\oy)}(\ox)\right)=\{0\}. 
%\]

\begin{Theorem}[Isolated calmness for the primal-dual system]\label{thm:calm} Let  $(\ox,\oy)\in \Bar S(\op,0)$. If  the solution mapping $\Hat S$ in \eqref{eq:HS} is isolated calm at $(\ou,\ov)$ for $(\ox,\oy)$, then we have
\begin{equation}\label{con:Calm}
  \Ker K^*\cap T_{\partial g(K\ox)}(\oy)=\{0\}\quad \mbox{and}\quad \Ker \nabla^2_{xx} f(\ox,\op)\cap K^{-1}\left(T_{\partial g^*(\oy)}(K\ox)\right)=\{0\}. 
\end{equation}
If, additionally, $g$  satisfies the primal-dual quadratic growth condition at $K\ox$ for $\oy$, then the above two conditions are sufficient for the isolated calmness of $\Hat S$ at $(\ou,\ov)$ for $(\ox,\oy)$. Consequently, they are sufficient for the isolated calmness of solution mapping $\Tilde S$ in \eqref{eq:TS} at $(\op,0)$ for $(\ox,\oy)$. 
\end{Theorem}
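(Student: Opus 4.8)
The plan is to reduce everything to the Levy--Rockafellar criterion \eqref{eq:LR0} for the mapping $F=\Hat S^{-1}$, whose graphical derivative has already been computed in the discussion leading to \eqref{eq:LR2}: a pair $(x,y)$ satisfies $0\in DF((\ox,\oy)|(\ou,\ov))(x,y)$ precisely when
\[
\nabla^2_{xx}f(\ox,\op)x+K^*y=0\quad\mbox{and}\quad Kx\in D\partial g^*(\oy|\,K\ox)(y).
\]
Since $\Hat S^{-1}=F$, isolated calmness of $\Hat S$ at $(\ou,\ov)$ for $(\ox,\oy)$ is equivalent to the statement that the only solution of this system is $(0,0)$. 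I would verify both conditions in \eqref{con:Calm} against this characterization.

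For necessity (which, I emphasize, needs no growth condition), I would produce for each offending tangent vector an explicit nonzero solution of the system and then invoke the criterion to force it to vanish. If $y\in\Ker K^*\cap T_{\partial g(K\ox)}(\oy)$, set $x=0$: the first equation holds because $K^*y=0$, and the second holds because $0\in D\partial g^*(\oy|\,K\ox)(y)$, which follows from the inclusion \eqref{eq:KerDS} applied to $\partial g^*$ at $(\oy,K\ox)$ together with $(\partial g^*)^{-1}(K\ox)=\partial g(K\ox)$ (conjugacy). The criterion then gives $y=0$. Symmetrically, if $x\in\Ker\nabla^2_{xx}f(\ox,\op)\cap K^{-1}(T_{\partial g^*(\oy)}(K\ox))$, set $y=0$: the first equation holds since $x\in\Ker\nabla^2_{xx}f(\ox,\op)$, and the second holds because $Kx\in T_{\partial g^*(\oy)}(K\ox)\subset\Ker D\partial g(K\ox|\,\oy)=D\partial g^*(\oy|\,K\ox)(0)$, where the inclusion is \eqref{eq:KerDS} for $\partial g$ and the equality is the swap identity recorded at the start of Theorem~\ref{thm:Ker}. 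The criterion forces $x=0$, establishing \eqref{con:Calm}.

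For sufficiency, I would follow the nonnegativity splitting already set up before the theorem: pairing the first equation with $x$ yields $\la\nabla^2_{xx}f(\ox,\op)x,x\ra+\la Kx,y\ra=0$, and since both summands are nonnegative (positive semidefiniteness of the Hessian, and monotonicity \eqref{eq:SPD0} for $D\partial g^*$), each vanishes. Positive semidefiniteness gives $x\in\Ker\nabla^2_{xx}f(\ox,\op)$, whence $K^*y=0$, i.e. $y\in\Ker K^*$; and $\la Kx,y\ra=0$ with $Kx\in D\partial g^*(\oy|\,K\ox)(y)$ puts us exactly in position to apply the zero-product property (Theorem~\ref{thm:Zero}) to $g^*$ at $\oy$ for $K\ox$, which produces $Kx\in D\partial g^*(\oy|\,K\ox)(0)$ and $y\in D\partial g(K\ox|\,\oy)(0)$. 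Here the two halves of the primal-dual growth condition enter separately: the quadratic growth of $g$ converts the first membership via \eqref{eq:Ker} into $Kx\in\Ker D\partial g(K\ox|\,\oy)=T_{\partial g^*(\oy)}(K\ox)$, and the quadratic growth of $g^*$ converts the second into $y\in\Ker D\partial g^*(\oy|\,K\ox)=T_{\partial g(K\ox)}(\oy)$. The two hypotheses in \eqref{con:Calm} then force $x=0$ and $y=0$, so the Levy--Rockafellar criterion delivers the isolated calmness of $\Hat S$. Finally, since \eqref{eq:LGE} is the partial linearization of \eqref{eq:GE} in $(x,y)$ about $(\ox,\oy)$, the implicit mapping theorem with strong metric subregularity \cite[Theorem~3I.13]{DR09} transfers isolated calmness from $\Hat S$ to $\Tilde S$ at $(\op,0)$ for $(\ox,\oy)$, exactly as in the proof of Theorem~\ref{thm:Suff}.

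The analytic substance --- the nonnegativity split and the zero-product property --- is already available, so I expect the main obstacle to be purely organizational: keeping straight which half of the primal-dual growth condition acts on which variable, and correctly matching each graphical-derivative membership emitted by Theorem~\ref{thm:Zero} with the right kernel/tangent-cone identity (one pairing $g$ with its dual, the other pairing $g^*$ with its dual). The necessity argument is clean once one recognizes that only the unconditional inclusion \eqref{eq:KerDS} and the swap identity are needed, so the care is concentrated in the sufficiency bookkeeping rather than in any new estimate.
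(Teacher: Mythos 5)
Your proposal is correct and follows essentially the same route as the paper: necessity via the unconditional inclusion \eqref{eq:KerDS} and the swap identity to build the test pairs $(0,y)$ and $(x,0)$ for the Levy--Rockafellar system \eqref{eq:LR2}, sufficiency via the nonnegativity split plus the zero-product property of Theorem~\ref{thm:Zero} and the kernel formula \eqref{eq:Ker}, and the final transfer to $\Tilde S$ by \cite[Theorem~3I.13]{DR09}. The only cosmetic difference is that you apply Theorem~\ref{thm:Zero} to $g^*$ (reading the membership as $Kx\in D\partial g^*(\oy|\,K\ox)(y)$) and then route through \eqref{eq:Ker} explicitly, whereas the paper reads it as $y\in D\partial g(K\ox|\,\oy)(Kx)$ and invokes the packaged equivalence \eqref{eq:Zero}; these are interchangeable via the swap identity.
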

\begin{proof} 
Let us start to show that \eqref{con:Calm} is necessary for the isolated calmness of $\Hat S$ at $(\ou,\ov)$ for $(\ox,\oy)$. Suppose that $\Hat S$ is isolated calm at $(\ou,\ov)$ for $(\ox,\oy)$. Hence condition~\eqref{eq:LR2} is satisfied. Pick any $y\in \Ker K^*\cap T_{\partial g(K\ox)}(\oy)$ and observe from  inclusion \eqref{eq:KerDS} (with $F=\partial g^*$) that 
\[
K^*y=0\quad \mbox{and}\quad 0\in D\partial g^*(\oy| \,K\ox)(y).
\]
Thus, the pair $(0,y)\in \XX\times \YY$ satisfies \eqref{eq:LR2}, which makes $y=0$, i.e., $\{0\}=\Ker K^*\cap T_{\partial g(K\ox)}(\oy)$. Now let us pick any $x\in \Ker \nabla^2_{xx} f(\ox,\op)\cap K^{-1}\left(T_{\partial g^*(\oy)}(K\ox)\right)$. Derive from $\eqref{eq:KerDS}$ (with $F=\partial g$) that 
\[
\nabla^2_{xx} f(\ox,\op)x=0\quad \mbox{and}\quad Kx\in D\partial g^*(\oy| \,K\ox)(0),
\]
which tells us that the pair $(x,0)\in \XX\times \YY$ satisfies system \eqref{eq:LR2}. Hence $x=0$ by \eqref{eq:LR}. The two conditions in \eqref{con:Calm} are verified.

Next, we show that both conditions in \eqref{con:Calm} are sufficient for the isolated calmness of $\Hat S$ at $(\ou,\ov)$ for $(\ox,\oy)$, provided that $g$ satisfies the primal-dual quadratic growth condition at $K\ox$ for $\oy$. Indeed, suppose that $(x,y)$ satisfies \eqref{eq:LR2} or \eqref{eq:LR}. The discussion before this theorem tells us that $x\in \Ker \nabla^2_{xx}f(\ox,\op)$, $y\in \Ker K^*$, and that 
\[
y\in D\partial g(K\ox|\; \oy)(Kx)\quad\mbox{and}\quad \la y,Kx\ra=0.
\]
As $g$ satisfies the primal-dual quadratic growth condition at $K\ox$ for $\oy$, the zero-product property in Theorem~\ref{thm:Zero} leads us to 
\[
y\in T_{\partial g(K\ox)}(\oy)\quad  \mbox{and}\quad Kx\in T_{\partial g^*(\oy)}(K\ox).
\]
It follows that 
\[
y\in \Ker K^*\cap T_{\partial g(K\ox)}(\oy)\quad \mbox{and}\quad x\in \Ker \nabla^2_{xx} f(\ox,\op)\cap K^{-1}\left(T_{\partial g^*(\oy)}(K\ox)\right).
\]
According to \eqref{con:Calm}, we have $(x,y)=(0,0)$. Thus, the Levy-Rockafellar criterion \eqref{eq:LR} holds, i.e., $\Hat S$ has the isolated calmness property at $(\ou,\ov)$ for $(\ox,\oy)$. 

Finally, as the isolated calmness of $\Hat S$ at $(\ou,\ov)$ for $(\ox,\oy)$ from the linearized system \eqref{eq:LGE} implies the isolated calmness of $\Tilde  S$ at $(\op,0)$ for $(\ox,\oy)$ \cite[Theorem~3I.13]{DR09}, conditions in \eqref{con:Calm} are also sufficient for the latter property of $\Tilde S$.
\end{proof}

\begin{Remark}[Strict Robinson Constraint Qualification]{\rm 
The dual (equivalent) version of the first condition in \eqref{con:Calm} is 
\begin{equation}\label{def:SRCQ}
    \Im K+N_{\partial g(K\ox)}(\oy)=\YY.
\end{equation}
As the function $g$ is convex and $\oy\in \partial g(K\ox)\neq \emptyset$, we get from \cite[Proposition~2.126]{BS00}
\begin{equation}\label{eq:Max}
\sup\left\{\la v,y\ra|\, y\in  \partial g(K\ox)\right\}=dg(K\ox|\, v)=\lim_{t\dn 0}\dfrac{g(K\ox+tv)-g(K\ox)}{t}, 
\end{equation}
which is the {\em directional derivative} of $g$ at $K\ox$ for the direction $v\in \YY$. Thus, the normal cone $N_{\partial g(K\ox)}(\oy)$ is actually the {\em critical cone} of $g$ at $K\ox$ for $\oy$ in the sense that 
\begin{equation}\label{eq:CC1}
N_{\partial g(K\ox)}(\oy)=\{v\in \YY|\, dg(K\ox|\, v)=\la v, \oy\ra\}\subset \dom dg(K\ox|\, \cdot)\subset T_{{\rm dom}\, g}(K\ox).
\end{equation}
Condition~\eqref{def:SRCQ} can be refereed as the {\em Strict Robinson Constraint Qualification} (SRCQ) for the composite function $g(Kx)$ at $K\ox$ for the multiplier $\oy\in \partial g(K\ox)$; see, e.g., \cite{DR97,ZZ16,DSZ17,CS18} and \cite[Definition~4.46]{BS00} for some special choices of $g$ such as the indicator of of a closed convex set $C$ of $\YY$. Due to the above inclusions, the SRCQ in \eqref{def:SRCQ} is stronger than the RCQ in \eqref{eq:RCQ}. The appearance of SRCQ for our composite model is consistent with other works \cite{DR97,ZZ16,DSZ17,CS18} mostly for some particular constrained optimization  problems. Here, our function $g$ is in a general setting and  just needs to satisfy the primal-dual quadratic growth condition. 

The dual (equivalent) version of the second condition in \eqref{con:Calm} or \eqref{eq:Suf} is 
\begin{equation}\label{def:DSRCQ}
    \Im \nabla^2_{xx}f(\ox,\op)^*+K^*\left(N_{\partial g^*(\oy)}(K\ox)\right)=\XX.
\end{equation}
Similarly to \eqref{eq:CC1}, the normal cone $N_{\partial g^*(\oy)}(K\ox)$ is also the critical cone of the conjugate function $g^*$ at $\oy$ for $K\ox$, as
\begin{equation}\label{eq:CC2}
N_{\partial g^*(\oy)}(K\ox)=\{u\in \YY|\, dg^*(\oy|\, u)=\la K\ox,u\ra\}. 
\end{equation}
This condition is also refereed as the Strict Robinson Constraint Qualification for the {\em dual} problem of the following model in \cite{CS18}
\begin{equation}\label{p:DS}
  Q(U,w)\quad   \min_{X\in \R^{m\times n}}\quad k(\mathcal{B}X)+ \theta(X)-\la U,X\ra \quad \mbox{subject to}\quad \mathcal{C}X-d+w\in \Theta,
\end{equation}
where $\mathcal{B}: \R^{m\times n}\to \mathbb{Z}$ and $\mathcal{C}: \R^{m\times n}\to \mathbb{W}$ are linear operators between Euclidean spaces, $d\in \mathbb{W}$ is a known vector, $k:\mathbb{Z}\to \R$ is a {\em strictly} convex and twice differentiable function, $\theta(X)=\|X\|_*$ is the nuclear norm for $X\in \R^{m\times n}$, $\Theta \subseteq \mathbb{W}$ is a convex {\em polyhedral cone}, and $(U,w)\in \R^{m\times n}\times \mathbb{W}$ represents for the full perturbation (parameter). This problem is a particular case of our problem~\eqref{p:PB} by with 
$\XX:=\R^{m\times n}$, $\mathbb{P}:=\XX$, 
\begin{equation*}
     f(X,U):=k(\mathcal{B}X)-\la U,X\ra,\; \mathcal{K}X:=(X, \mathcal{C}X)\in \YY:=\R^{m\times n}\times \mathbb{W},\;\;g(X,w):=\theta(X)+\delta_{\Theta}(w-d),
\end{equation*}
and $v:=(0,w)\in \YY$ is the canonical parameter in \eqref{p:PB}. Suppose $\OX\in \XX$ is an optimal solution of problem $Q(0,0)$ in \eqref{p:DS} and $\oy=(\OY,\ow)\in \YY$ is a multiplier satisfying $\OY\in \theta(\OX)$  and $\mathcal{C}^*\ow\in N_\Theta(\mathcal{C}\OX-d)$.  As the function $k$ is strictly convex, $\Ker \nabla^2_{xx}f(\OX,0)=\Ker \mathcal{B}$. Moreover, since the function $g$ is separable with variable $X$ and $w$, condition \eqref{def:DSRCQ} turns  into 
\begin{equation}\label{def:SRCQ2}
\Im \mathcal{B}^*+N_{\partial \theta^*(\OY)}(\OX)+\mathcal{C}^*\left(N_{\partial \delta_\Theta^*(\ow)}(\mathcal{C}\OX-d)\right)=\XX,
\end{equation}
which is exactly the SRCQ condition for the dual problem defined in \cite[Condition~(5.8)]{CS18} after imposing formula \eqref{eq:CC2}. Consequently, the equivalence between the isolated calmness of $\Tilde S$ at $(0,0)\in \YY$ for $(\ox,\oy)$ and SRCQ conditions~\eqref{def:SRCQ} and \eqref{def:SRCQ2} can be obtained from our Theorem~\ref{thm:calm} (the tilt perturbation $-\la U,X\ra$ instead of general parameter plays an important role here and the linearization as in \eqref{eq:LGE} is not necessary any more.) This covers some important part of \cite[Theorem~5.3]{CS18}. To avoid dilution, we do not go into details about this implication, but the key observation is the (separable) function $g(X,w)=\theta(X)+\delta_{\Theta}(w-d)$ defined above satisfies the primal-dual quadratic growth condition at $(\OX,0)$ for $(\OY,\ow)$, as discussed in Remark~\ref{rem:QL} (a)\&(c). This observation also allows us to extend model \eqref{p:DS} to more general situations, at which the functions $\theta$ and $\delta_\Theta$ just need to satisfy the primal-dual quadratic growth conditions; particularly, $\theta$ could be any  spectral functions from Remark~\ref{rem:QL}(c) and $\Theta$ is the positive semi-definite cone $\mathbb{S}_+^r$ (the indicator function $\delta_{\mathbb{S}_+^r}$ is also a spectral function from Remark~\ref{rem:QL}(c).)\endproof
}
\end{Remark}

For the least-squares regularized problems \eqref{eq:LeS}, its corresponding primal-dual solution mapping is 
\begin{equation}\label{eq:PDLS}
   \Tilde S_{LS}(b,\mu,v):=\left\{(x,y)\in \XX\times \YY|\; \begin{pmatrix}0\\v\end{pmatrix}\in\begin{pmatrix} 0&K^*\\-K&0\end{pmatrix}\begin{pmatrix}x\\y\end{pmatrix}+ \begin{pmatrix}\frac{1}{\mu}\Phi^*(\Phi x-b)\\\partial g^*(y)\end{pmatrix}\right\}. 
\end{equation}
Suppose that $(\ox,\oy)\in \Tilde S_{LS}(\bar b,\bar \mu,0)$. 
Conditions in \eqref{con:Calm} together with \eqref{eq:KHes} turn into 
\begin{equation}\label{eq:CalmLS}
    \Ker K^*\cap T_{\partial g(K\ox)}(\oy)=\{0\}\quad \mbox{and}\quad \Ker \Phi\cap K^{-1}\left(T_{\partial g^*(\bar y)}(K\ox)\right)=\{0\}, 
\end{equation}
which are sufficient for the isolated calmness of $\Tilde S_{LS}$ at $(\bar b,\bar \mu,0)$ for $(\ox,\oy)$. Whether these two conditions are also necessary for the isolated calmness of $\Tilde S_{LS}$ at $(\bar b,\bar \mu,0)$ is a natural question in the spirit of Theorem~\ref{thm:NSC}. As the first condition in \eqref{eq:CalmLS} only involves information of $g$ and $K$, it plays a role of a qualification condition. On the other hand, the second condition in \eqref{eq:CalmLS} is exactly \eqref{eq:Suf}, which is a full characterization for the isolated calmness of $S_{LS}$ at $(\bar b,\bar \mu)$ under either {\bf (i)} or {\bf (ii)} in Theorem~\ref{thm:NSC}. We conclude this section with the following result, which provides the equivalence between the isolated calmness of $\Tilde S_{LS}$ at $(\bar b,\bar \mu,0)$ for $(\ox,\oy)\in \Tilde S(\bar b,\bar \mu,0)$ under several qualification conditions. 

\begin{Corollary}[Isolated calmness of the primal-dual solution mapping $\Tilde S_{LS}$]\label{cor:Eqi3} Let $(\ox,\oy)\in \Tilde S_{LS}(\bar b,\bar \mu,0)$ be a solution of the primal-dual system in \eqref{eq:PDLS}. Suppose that $g$ satisfies the primal-dual quadratic growth condition at $K\ox$ for $\oy$. Then $\Tilde S_{LS}$ is isolated calm at $(\bar b,\bar \mu,0)$ for $(\ox,\oy)$ if and only if condition~\eqref{eq:Suf} is satisfied, provided that one of the following conditions holds:

\begin{itemize}
    \item[{\bf(i)}] The set $\partial g^*(\oy)$ is polyhedral and $\Ker K^*\cap T_{\partial g(K\ox)}(\oy)=\{0\}$. 
     \item[{\bf(ii)}] $\Im K\cap\ri(\partial g^*(\oy))\neq \emptyset$ and $\Ker K^*\cap T_{\partial g(K\ox)}(\oy)=\{0\}$. 
     \item[{\bf(iii)}] $\Ker K^*\cap N_{\partial g^*(\oy)}(K\ox)=\{0\}$. 
\end{itemize}
    
\end{Corollary}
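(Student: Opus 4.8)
The plan is to read off from \eqref{eq:CalmLS} the two conditions
\begin{equation*}
\text{(A)}\quad \Ker K^*\cap T_{\partial g(K\ox)}(\oy)=\{0\},\qquad \text{(B)}\quad \Ker\Phi\cap K^{-1}\!\left(T_{\partial g^*(\oy)}(K\ox)\right)=\{0\},
\end{equation*}
observing that (B) is precisely \eqref{eq:Suf}. I will establish the asserted equivalence by combining three facts: that each of (i)--(iii) forces (A); that (A) together with (B) is sufficient for isolated calmness (this is the sufficiency half of Theorem~\ref{thm:calm} specialized to the least-squares data just above, where the primal-dual quadratic growth of $g$ is invoked); and that isolated calmness of $\Tilde S_{LS}$ forces (B) with no qualification at all. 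Granting these, the qualification supplies (A), whence (A) and (B) give that \eqref{eq:Suf} implies isolated calmness, while the third fact gives the reverse implication, yielding the equivalence.

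That each qualification yields (A) is immediate for (i) and (ii), since (A) is one of their clauses. For (iii) I use the one-sided inclusion
\begin{equation*}
T_{\partial g(K\ox)}(\oy)\subseteq N_{\partial g^*(\oy)}(K\ox),
\end{equation*}
which follows from monotonicity of $\partial g$: for $u=s(v'-\oy)$ with $s\ge0$, $v'\in\partial g(K\ox)$, and any $z\in\partial g^*(\oy)$ (so that $\oy\in\partial g(z)$), monotonicity gives $\la v'-\oy,\,K\ox-z\ra\ge0$, hence $\la u,\,z-K\ox\ra\le0$; passing to the closure places $u$ in $N_{\partial g^*(\oy)}(K\ox)$. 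Thus (iii) implies (A). I will note that the reverse inclusion, and so equality, can fail already for smooth $g$ (where $\partial g(K\ox)$ is a singleton), so only this one-sided inclusion is at our disposal.

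The crux is the unconditional necessity of (B). Assume $\Tilde S_{LS}$ is isolated calm at $(\bar b,\bar\mu,0)$ for $(\ox,\oy)$ with modulus $\ell$, and take any $w\in\Ker\Phi\cap K^{-1}(T_{\partial g^*(\oy)}(K\ox))$. By definition of the tangent cone there are $t_k\dn0$ and $\xi_k\to Kw$ with $K\ox+t_k\xi_k\in\partial g^*(\oy)$. The idea is to \emph{freeze the dual variable} and absorb the tangent direction into the canonical parameter: set $x_k:=\ox+t_kw$, $y_k:=\oy$, keep $(b,\mu)=(\bar b,\bar\mu)$, and put $v_k:=t_k(\xi_k-Kw)$. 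Since $\Phi w=0$ forces $\Phi x_k=\Phi\ox$, the first line of \eqref{eq:PDLS} collapses to $K^*\oy+\tfrac1{\bar\mu}\Phi^*(\Phi\ox-\bar b)=0$, true by hypothesis; the second line holds because $Kx_k+v_k=K\ox+t_k\xi_k\in\partial g^*(\oy)$. Hence $(x_k,y_k)\in\Tilde S_{LS}(\bar b,\bar\mu,v_k)$ and $\|v_k\|=t_k\|\xi_k-Kw\|=o(t_k)$. As $(x_k,y_k)\to(\ox,\oy)$, isolated calmness yields $t_k\|w\|=\|(x_k,y_k)-(\ox,\oy)\|\le\ell\|v_k\|=\ell t_k\|\xi_k-Kw\|$, so $\|w\|\le\ell\|\xi_k-Kw\|\to0$ and $w=0$; this is exactly \eqref{eq:Suf}.

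The main obstacle is this last step. The tempting route is to imitate the necessity proof of Theorem~\ref{thm:NSC} by perturbing $b$ with $v$ held at $0$; but there $\xi_k$ need not lie in $\Im K$, so $K\ox+t_k\xi_k$ may leave the range of $K$ and cannot be written as $K(\ox+t_kw_k)$. Freezing $y_k=\oy$ and routing the discrepancy $\xi_k-Kw$ through the canonical perturbation $v_k$ makes the construction blind to whether $\xi_k\in\Im K$; this is also the structural reason that no qualification is needed for necessity, the qualifications entering only to produce (A) for the sufficiency direction. With the three facts in place, the claimed equivalence under each of (i)--(iii) follows at once.
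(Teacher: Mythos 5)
Your proposal is correct, and its necessity half takes a genuinely different route from the paper. The paper derives necessity by first passing from $\Tilde S_{LS}$ to the primal mapping $S_{LS}$ (if $(x,y)\in\Tilde S_{LS}(b,\mu,0)$ then $x\in S_{LS}(b,\mu)$, so isolated calmness transfers) and then invoking Theorem~\ref{thm:NSC}; but that theorem only yields the weaker condition \eqref{eq:Nes} involving $T_{\partial g^*(\oy)\cap{\rm Im}\,K}(K\ox)$, so the qualifications {\bf(i)}--{\bf(ii)} are needed there precisely to identify \eqref{eq:Nes} with \eqref{eq:Suf}, and {\bf(iii)} must then be shown to imply {\bf(ii)} in full, including the clause $\Im K\cap\ri(\partial g^*(\oy))\neq\emptyset$ via \cite[Corollary~2.98]{BS00}. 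Your construction instead proves the necessity of \eqref{eq:Suf} \emph{unconditionally}: by freezing $y_k=\oy$, setting $x_k=\ox+t_kw$, and routing the discrepancy $\xi_k-Kw$ through the canonical parameter $v_k=t_k(\xi_k-Kw)$, you obtain $(x_k,\oy)\in\Tilde S_{LS}(\bar b,\bar\mu,v_k)$ with $\|v_k\|=o(t_k)$, and the calmness estimate forces $w=0$; I verified that both lines of \eqref{eq:PDLS} are satisfied and that the limit argument is sound. This buys two things: it sidesteps the transfer to $S_{LS}$ (which tacitly requires a multiplier near $\oy$ for each nearby primal solution), and it reduces the role of {\bf(i)}--{\bf(iii)} to supplying only the SRCQ-type condition $\Ker K^*\cap T_{\partial g(K\ox)}(\oy)=\{0\}$ needed for the sufficiency half via Theorem~\ref{thm:calm} and \eqref{eq:CalmLS} --- in effect proving the slightly stronger statement that the equivalence holds under that single condition. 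Your treatment of {\bf(iii)} via the monotonicity inclusion $T_{\partial g(K\ox)}(\oy)\subset N_{\partial g^*(\oy)}(K\ox)$ is the polar form of the paper's conjugacy argument and is equally valid; the sufficiency half coincides with the paper's.
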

\begin{proof} Note that if $(x,y)\in \Tilde S_{LS}(b,\mu,0)$, then $x\in S_{LS}(b,\mu)$. Thus if $\Tilde S_{LS}$ is isolated calm at $(\bar b,\bar \mu,0)$ for $(\ox,\oy)$, then the solution mapping $S_{LS}$ is isolated calm at $(\bar b,\bar \mu)$ for $\ox$. Combining Theorem~\ref{thm:NSC} and Theorem~\ref{thm:calm} gives us the equivalence between the isolated calmness of $\Tilde S_{LS}$ at $(\bar b,\bar \mu,0)$ for $(\ox,\oy)$ and condition~\eqref{eq:Suf} under the validity of either {\bf(i)} or {\bf (ii)}. 

To justify this equivalence under {\bf (iii)}, we only need to show that {\bf (iii)} is sufficient for {\bf(ii)}. We claim that $T_{\partial g(K\ox)}(\oy)\subset N_{\partial g^*(\oy)}(K\ox)$, or equivalently, $T_{\partial g^*(\oy)}(K\ox)\subset N_{\partial g(K\ox)}(\oy)$ by taking their polar cones. Indeed, for any $z\in T_{\partial g^*(\oy)}(K\ox)$, there exist $t_k\dn 0$ and $z_k\to z$ such that $K\ox+t_kz_k\in \partial g^*(\oy)$. It follows that 
\[
g(K\ox+t_kz_k)=\la K\ox+t_kz_k,\oy\ra-g^*(\oy)=t_k\la z_k,\oy\ra+\la K\ox,\oy\ra-g^*(\oy)=t_k\la z_k,\oy\ra+g(K\ox).
\]
Since $g$ is a convex function, we know that 
\[
\la z,\oy\ra=\lim_{k\to \infty} \la z_k,\oy\ra=\lim_{k\to\infty}\dfrac{g(K\ox+t_kz_k)-g(K\ox)}{t_k}=dg(K\ox;z),
\]
As observed in \eqref{eq:CC1}, we have  $z\in N_{\partial g(K\ox)}(\oy)$. This verifies the claim.

Now, suppose that {\bf (iii)} is satisfied. The above claim leads us to $\Ker K^*\cap T_{\partial g(K\ox)}(\oy)=\{0\}$. Next, we show that $\Im K\cap \ri(\partial g^*(\oy))\neq \emptyset$. Indeed, condition {\bf (iii)} is  equivalent to the following form \cite[Corollary~2.98]{BS00}
\[
0\in {\rm int}\, (K\ox+\Im K-\partial g^*(\oy))={\rm int}\, (\Im K-\partial g^*(\oy)),
\]
which implies that 
\[
0\in \ri(\Im K-\partial g^*(\oy))=\ri(\Im K)-\ri(\partial g^*(\oy))=\Im K-\ri(\partial g^*(\oy)).
\]
This means $\Im K\cap \ri(\partial g^*(\oy))\neq \emptyset$. Hence, condition {\bf (ii)} is valid whenever condition {\bf (iii)} holds. The proof is completed.  
\end{proof}

\section{Conclusion}
In this paper, we give a systematic study for the isolated calmness of  solution mappings of the composite model \eqref{p:PO} 
and the Lagrange system of \eqref{p:PB} with canonical parameter.
Our main intension is to apply these models to the case of least-squares optimization problems in  \eqref{eq:LeS} and \eqref{eq:PDLS}. We obtain several necessary and sufficient conditions for isolated calmness of these solution mappings. The gaps between those necessary and sufficient conditions are narrow. Indeed, they are the same under mild conditions.   

It may be interesting to extend this approach to Lipschitz stability \cite{BBH23,BBH24,CHNS25,N24} for these solution mappings, especially for the convex composite models that the aforementioned papers have not discovered yet. Another direction that we are working on is for the case that $\bar \mu =0$ in \eqref{eq:LeS}, which reduces the problem to the {\em regularized linear inverse problem}. The isolated calmness is very close 
to the so-called property of {\em stable recovery} widely known in Inverse Problems community; see, e.g., \cite{NPV24} for a recent development in this direction.

\end{document}